\newcommand{\cM}{{\mathcal{M}}}
\newcommand{\cO}{{\mathcal{O}}}
\newcommand{\cC}{{\mathcal{C}}}
\newcommand{\cD}{{\mathcal{D}}}
\newcommand{\cH}{{\mathcal{H}}}
\newcommand{\cS}{{\mathcal{S}}}
\newcommand{\cU}{{\mathcal{U}}}
\newcommand{\RR}{{\mathbb{R}}}
\newcommand{\fkl}{{\mathfrak{lo}}}
\newcommand{\fklzero}{{\mathfrak{lo}^0}}
\newcommand{\fku}{{\mathfrak{up}}}
\newcommand{\fko}{{\mathfrak{o}}}
\newcommand{\p}{\mathrm{p}}
\newcommand{\bGL}{{\bf GL}}
\newcommand{\bGLpos}{{\bf GL^{\pos}}}
\newcommand{\bGLp}{{\bf GL^+}}
\newcommand{\bL}{{\bf Lo}}
\newcommand{\bLp}{{\bf Lo^+}}
\newcommand{\bO}{{\bf O}}
\newcommand{\bOpos}{{\bf O^{\pos}}}
\newcommand{\bSO}{{\bf SO}}
\newcommand{\bLone}{{\bf Lo^1}}
\newcommand{\bU}{{\bf Up}}
\newcommand{\bUp}{{\bf Up^+}}
\newtheorem{theo} {Theorem}
\newtheorem{lemma} {Lemma}[section]
\newtheorem{prop}[lemma] {Proposition}
\newtheorem{coro}[lemma] {Corollary}
\theoremstyle{remark}
\newtheorem{remark}[lemma]{Remark}
\newcommand{\e}{\, = \, }
\newcommand{\diag}{\operatorname{diag}}
\newcommand{\pos}{\operatorname{pos}}
\newcommand{\sgn}{\operatorname{sgn}}
\newcommand{\svd}{\operatorname{svd}}
\date{}
\begin{document}

	\title{Linearizing  Toda and SVD flows  on \\ large phase spaces of matrices with real spectrum}
	\author{Ricardo S. Leite, Nicolau C. Saldanha, \\ David Mart\'inez Torres and Carlos Tomei }


	\maketitle	
	
	\centerline{\it To the memory of Hermann Flaschka}

\begin{abstract}
We consider different phase spaces for the Toda flows  (\cite{Flaschka,Moser}) and the less familiar SVD flows (\cite{Chu, Li}). For the Toda flow, we handle symmetric and non-symmetric matrices with real simple eigenvalues, possibly with a given profile. Profiles encode, for example, band matrices and Hessenberg matrices. For the SVD flow, we assume simplicity of the singular values.
In all cases, an open cover is constructed, as are corresponding charts to Euclidean space. The charts linearize the flows, converting it into a linear differential system with constant coefficients and diagonal matrix. A variant construction transform the flows into uniform straight line motion.  Since limit points belong to the phase space, asymptotic behavior becomes a local issue.  The constructions rely only on basic facts of linear algebra, making no use of symplectic geometry.
\end{abstract}

\medbreak

{\noindent\bf Keywords:} Isospectral manifolds, Toda flows, SVD flows,  superintegrability.

\smallbreak

{\noindent\bf MSC-class:}  65F18; 15A29; 37J35.

\section{Introduction}

The Toda flow  -- a Hamiltonian system of $n$ particles in the line --  was famously introduced by Hermann Flaschka \cite{Flaschka} to the integrable community   by casting it as a Lax pair inducing a flow $J(t)$ on Jacobi matrices,
	\begin{equation} \label{TodaJacobi}
	\dot J \e [ J , \Pi_\fko \,  J] \e [ J , - \Pi_\fku \,  J]		 , \quad J(0) = J_0 \, .
	\end{equation}
Here, $\Pi_\fko$ and $\Pi_\fku$ are projections associated with the direct sum $\cM \e \fko \oplus \fku$, where $\cM$ is the vector space of real, $n \times n $ matrices, $\fko$ and $\fku$ are the Lie algebras of skew symmetric and upper triangular matrices respectively.

The standard symplectic structure in $\RR^{2n}$  yields a symplectic structure on Jacobi matrices of trace zero, after getting rid of the simple dynamics of the center of mass.
	The  eigenvalues of $J(t)$ stay fixed along the flow, and Poisson commute: they  provide the required conserved quantities to yield the complete integrability of the system.
	
	The subject expanded substantially. 
	Here we list a minimal set of subsequent advances, leading to the issues we consider in this text;
	see \cite{DLSTT} for more detailed historical information. Moser \cite{Moser} obtained angle variables, essentially the discrete counterpart of the norming constants used as inverse scattering variables for Schrödinger operators. Adler \cite{Adler} showed that Jacobi matrices of trace zero  are a coadjoint orbit of the group of upper triangular matrices. Deift et al. (\cite{DLNT1, DLT2}) then showed integrability of the Toda flow on larger coadjoint orbits, first of generic real, symmetric matrices, then generic real, non-symmetric matrices. 
	
	To fix notation, given a real polynomial $p$, the  {\it Toda flow} is the solution of the differential equation (derivatives with respect to time are denoted by dots)
\begin{equation} \label{Toda}
	\dot M \e [ M , \Pi_\fko \,  p(M)] , \quad M(0) = M_0 \, ,
\end{equation}
where $p(M)$ is the evaluation of the polynomial $p(x)$ at $x = M$.
	
	\medskip
	Here we introduce {\it linearizing variables} on real matrices, symmetric or not, with real, simple spectrum. A first example were {\it bidiagonal variables} for Jacobi matrices \cite{LST1}. An extension  \cite{TT} considered  the manifold of full flags of any non-compact real semisimple Lie algebra and its Hessenberg-type submanifolds. These references and the present self-contained paper make no use of symplectic theory, the constructions using classical tools from linear algebra.

\bigskip	
Let $\bGL$ denote the group of invertible matrices.
 The orthogonal group and its Lie algebra are denoted by $\bO$ and $\fko$. The (disconnected) groups of invertible lower and upper triangular matrices are, respectively, $\bL$ and $\bU$. The connected component of the identity are the subgroups $\bLp$ and $\bUp$.  The  nilpotent group  of lower triangular matrices with diagonal entries equal to 1 is $\bLone$ with Lie algebra $\fkl^0$. 
	
\bigskip
We present the linearization of Toda the flows in minimal form; for the basic linear algebra vocabulary, see the Appendix. Let $M = X^{-1} D X$ be a real, diagonalizable matrix with real simple spectrum, where the entries of $ D = \diag(\lambda_1, \ldots, \lambda_n)$, 
are in decreasing order. Generically, the determinants of the $k \times k$ top submatrices of $X$ are nonzero: suppose they are positive.  From the $QR$ decomposition $X = \tilde L Q$ ($\tilde L \in \bLp$  and $Q \in \bO$), 
\[ M \e X^{-1} D X \e Q^\top \tilde L^{-1} D \tilde L Q \e Q^\top (Y + D) Q,   \qquad Y \in \fkl^0 . \]
 Again, from the $LU$ decomposition $Q = LU$ ($ L \in \bLone$  and $U \in \bU$),
\begin{equation} \label{resumo}
	 M \e Q^\top (Y + D) Q \e U^{-1} L^{-1} (Y + D) L U \e U^{-1} (Z + D) U, \qquad Z \in \fkl^0 .
	 \end{equation}
The triple of matrices $(D, Y, Z)$ recovers $M$ and, if $M$ evolves with a Toda flow,
\[ \dot D^\pi = \dot Y =  0 \, ,  \qquad \dot Z = [Z, -p(D)] \,  .\]
Thus, the entries of $Z$ evolve according to the ODE's $\dot z_{ij} = (p(\lambda_i) - p(\lambda_j)) \, z_{ij}$: the system decouples. In a nutshell, this is the content of Section \ref{Euclideancharts}.	
	
\bigskip	
Let $D$ be a real diagonal matrix with simple spectrum.	The symmetric and non-symmetric  {\it isospectral manifolds} are the orbits
	\begin{equation}
		\cO^\bO_D \e \{ Q^\top \, D \, Q , Q \in \bO\} \, , \quad \cO_D^\bGL \e \{ X^{-1} \, D \,X, X \in \bGL\} \, .
	\end{equation}
Let $S_n$ be the symmetric group of permutations of $\{1, 2, \ldots,n\}$.  Write $\cO$ to denote either $\cO^\bO_D$ or $\cO_D^\bGL$.

	\begin{theo} \label{main} Each isospectral manifold  admits an atlas  with charts  $\phi^\pi: \cU^\pi_D \subset \cO \to \RR^N, \pi \in S_n, N = \dim \cO$. The following properties hold.
	
	\begin{enumerate}
		\item 
		Each chart domain $\cU^\pi_D$ is an open dense set of $\cO$. 
		Each diagonal matrix in $\cO$ belongs to a unique $\cU^\pi_D$. Each chart $\phi^\pi$ is a diffeomorphism.
	\item Each set $\cU^\pi_D$ is invariant under the Toda flows.
	    In these variables (i.e., in the images of the charts), the Toda flow associated with a polynomial $p$ is 
	    \[(\dot y,\dot z) \e (0, C z), \quad y \in\RR^{N_y}, \quad z \in \RR^{N_z}, \quad N = N_y + N_z \, , \]  for a constant diagonal matrix $C$ (dependent on $p$). If $\cO = \cO_D^\bO$, $N_y = 0$.
	
	    \end{enumerate}
    \end{theo}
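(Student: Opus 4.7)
The plan is to extend the construction sketched in (\ref{resumo}) to an atlas indexed by $\pi \in S_n$, the idea being to reorder the eigenvalues of $D$ along $\pi$ before running the $QR$ and $LU$ factorisations. For each $\pi$, let $D^\pi \e \diag(\lambda_{\pi(1)}, \ldots, \lambda_{\pi(n)})$ and declare $\cU^\pi_D$ to be the set of $M \in \cO$ admitting a diagonalisation $M \e X^{-1} D^\pi X$ with all leading principal minors of $X$ strictly positive. Running the construction of (\ref{resumo}) on such an $X$ produces a pair $(Y,Z) \in \fkl^0 \times \fkl^0$ which is independent of the residual ambiguity in $X$ (left multiplication by a positive diagonal matrix, under which neither $Y$ nor $Z$ change, since $D^\pi$ commutes with diagonals), and we set $\phi^\pi(M) \e (Y,Z)$. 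In the symmetric case one takes $X$ orthogonal, which forces $\tilde L \e I$ and $Y \e 0$, so $\phi^\pi$ lands in $\fkl^0$ alone, consistent with $N_y \e 0$.

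Openness of $\cU^\pi_D$ follows because, once the eigenvalue order is fixed, $X$ depends smoothly on $M$ (up to the harmless diagonal scaling) and strict positivity of leading minors is an open condition. Density holds because the complement is the vanishing locus of finitely many polynomials. If $D^\sigma \in \cO$ is written as $X^{-1} D^\pi X$, the relation $X D^\sigma \e D^\pi X$ forces $X$ to be a positive diagonal times the permutation matrix of $\sigma^{-1} \pi$; its leading minors are all positive precisely when $\sigma^{-1} \pi$ preserves $\{1, \ldots, k\}$ for every $k$, i.e.\ when $\pi \e \sigma$, giving the uniqueness claim. To verify that $\phi^\pi$ is a diffeomorphism, one constructs the smooth inverse by solving, successively, the linear systems $L_1^{-1} D^\pi L_1 \e Y + D^\pi$ for $L_1 \in \bLone$ and $L_2^{-1}(Y + D^\pi) L_2 \e Z + D^\pi$ for $L_2 \in \bLone$; read below the diagonal, each system is triangular with nonzero pivots $\lambda_{\pi(i)} - \lambda_{\pi(j)}$ and hence uniquely solvable. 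From $L_2$ one recovers $Q \in \bO$ by a Cholesky-type normalisation, and $M$ is then reconstructed by reversing (\ref{resumo}). Dimension counting matches: $\dim \fkl^0 \e n(n-1)/2 \e \dim \cO^\bO_D$ in the symmetric case and $2 \dim \fkl^0 \e n(n-1) \e \dim \cO^\bGL_D$ in the non-symmetric one.

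For the dynamics, invariance of $\cU^\pi_D$ under the Toda flow uses the Lax-pair representation $M(t) \e R(t)^{-1} M_0 R(t)$ with $R(t) \in \bUp$ coming from the $QR$-factorisation of $e^{t\,p(M_0)}$; right multiplication of $X_0$ by $R(t)^{-1} \in \bUp$ preserves positivity of the leading principal minors. Differentiating $M \e U^{-1}(Z + D^\pi)U$ and equating against (\ref{Toda}) yields $\dot Z \e [Z, -p(D^\pi)]$, which decouples into $\dot z_{ij} \e (p(\lambda_{\pi(i)}) - p(\lambda_{\pi(j)}))\, z_{ij}$: the announced constant-coefficient diagonal linear system.

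The main obstacle, and the only genuinely non-bookkeeping step, is $\dot Y \e 0$ in the non-symmetric case. Because the flow multiplies $X$ on the right by elements of $\bUp$, the Cholesky factor $\tilde L(t)$ of $X(t) X(t)^\top$ genuinely depends on $t$; what must be shown is that only its positive diagonal part moves while its $\bLone$ component stays fixed, so that $Y \e \tilde L^{-1} D^\pi \tilde L - D^\pi$ is conserved and all of the Toda dynamics are funnelled onto $L_2$, and hence onto $Z$. Equivalently, the lower-triangular representative $Y + D^\pi$ of $M$ is acted upon by the flow purely through conjugation inside $\bLone$. Once this invariance is in hand, the atlas properties and the linearisation follow from the computation above.
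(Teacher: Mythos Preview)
Your chart construction matches the paper's, and your use of the Symes factorisation $e^{tp(M_0)}=Q(t)R(t)$ to prove invariance of $\cU^\pi_D$ is a legitimate alternative to the paper's route (which deduces invariance \emph{a posteriori} from global solvability of the linearised ODE). However, two genuine gaps remain.

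\textbf{The step $\dot Y=0$ is not proved.} You correctly flag this as the crux, but leave it open. In fact your own set-up closes it in one line: from $M_0=X_0^{-1}D^\pi X_0$ and $X_0\,p(M_0)=p(D^\pi)\,X_0$ one gets
\[
X(t)\;=\;X_0\,R(t)^{-1}\;=\;X_0\,e^{-tp(M_0)}\,Q(t)\;=\;e^{-tp(D^\pi)}\,X_0\,Q(t),
\]
so in the factorisation $X(t)=\tilde L(t)\,Q_M(t)$ one has $\tilde L(t)=e^{-tp(D^\pi)}\tilde L_0$, i.e.\ $\tilde L$ changes only by the positive diagonal $e^{-tp(D^\pi)}$ on the left, exactly the claim you said ``must be shown''. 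The paper instead proves $\dot Y=0$ by a direct-sum decomposition of the tangent space (its Proposition~\ref{splitting}): writing $\dot M = Q^\top\dot D^\pi Q + Q^\top\dot Y Q + [M,Q^\top\dot Q]$ and showing the three pieces live in complementary subspaces $C\oplus V\oplus W$, one concludes from $\dot M\in W$ that $\dot D^\pi=\dot Y=0$. Either argument works; yours is shorter once completed, the paper's is more intrinsic and independent of the Symes formula.

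\textbf{The step $\dot Z=[Z,-p(D^\pi)]$ is not bookkeeping.} Differentiating $M=U^{-1}(Z+D^\pi)U$ gives $\dot Z = [Z+D^\pi,\,\dot U U^{-1}] + U\dot M U^{-1}$, and neither summand is easy to identify separately (the factorisation $M=U^{-1}(Z+D^\pi)U$ is not unique, so $U$ is not determined by $M$ alone). The paper's computation goes through $L$ instead: from $Q=LU$ and $\dot Q=QA$ with $A=\Pi_\fko p(M)$ one extracts $L^{-1}\dot L=\Pi_{\fkl^0}(U A U^{-1})$, then uses $\Pi_\fko p(M)=p(M)-\Pi_\fku p(M)$ and the $\bU$-invariance of upper triangular matrices to reduce this to $p(Z+D^\pi)-p(D^\pi)$; only then does differentiating $Z+D^\pi=L^{-1}(Y+D^\pi)L$ yield the clean $\dot Z=[Z,-p(D^\pi)]$. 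This chain of reductions is the actual content of the linearisation and cannot be skipped.

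A minor omission: you argue each $\cU^\pi_D$ is open and dense but not that the family covers $\cO$; the paper obtains this from the PLU decomposition of an arbitrary diagonalising matrix $X$.
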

Theorem \ref{main} is the juxtaposition of results obtained in Section \ref{Euclideancharts}.
The results are extended in Section  \ref{profiles} to matrices of a fixed profile, to be defined later. Charts restrict well to subspaces  of banded matrices, symmetric or not.

\bigskip
The connection of Toda flows with physics and numerical analysis led to the study of asymptotic behavior of such flows (\cite{Symes, DNT}). In the symmetric case, limit points of the differential equations are diagonal matrices, in the non-symmetric case they are upper triangular. Since these limit points belong to some chart domain $\cU^\pi_D$, convergence now is a matter of local theory. Bidiagonal variables on Jacobi matrices were  employed in \cite{LST1} to compute the Toda phase shift originally obtained by Moser \cite{Moser} and  P. Deift (personal communication). In \cite{LST2}, they were used  to provide a complete description of the dynamics of the classical Wilkinson's shift strategy to compute eigenvalues of Jacobi matrices \cite{Parlett}.

\bigskip
Chu \cite{Chu} and Li \cite{Li} considered the so called SVD flows,
\begin{equation}
	\dot M = M  \, \big( \Pi_\fko \, p(M^\top M) \big) - \big(\Pi_\fko \, p(M M^\top)\big) \, M ,
\end{equation}
which have the property of preserving the singular values of the initial condition. Chu related the flows to numerical algorithms and computed asymptotics of solutions. Li obtained integrability on an appropriate symplectic space of large dimension. Our techniques apply also to this context, as we show in Section \ref{SVDlinearized}.

\medskip
For $\Sigma= \diag(\mu_1, \ldots, \mu_n), \mu_1 > \mu_2 > \ldots > \mu_n > 0$, the set
\[ \cO^{\svd}_{\Sigma} \e  \{ Q^\top \, \Sigma\, U , \  Q, U \in \bO \} \ \]
consists of the real $n \times n $ matrices with singular values equal to $\Sigma$.

Let $\cD^{\sgn} \subset \bGL$ be the group of sign diagonal matrices.

\begin{theo} \label{mainSVD} The set  $\cO^{\svd}_{\Sigma}$ admits an atlas  with charts indexed by $(\pi,\rho,E) \in S_n \times S_n \times \cD^{\sgn}$, $\phi_{\pi,\rho,E}: \cU_{\pi,\rho,E} \subset \cO^{\svd}_{\Sigma} \to \RR^N, N = \dim \cO^{\svd}_{\Sigma}$.
	
	\begin{enumerate}
		\item 
		Each  $\cU_{\pi,\rho,E}$ is an open  subset of $\cO^{\svd}_{\Sigma} $. 
		Each chart is a diffeomorphism.
		\item The sets $\cU_{\pi,\rho,E}$ are invariant under the Toda flows.
		In these variables, the SVD flow associated with a polynomial $p$ is $\dot v \e C v \in \RR^N$ for a constant diagonal matrix $C$ (dependent on $p$).
		
	\end{enumerate}
\end{theo}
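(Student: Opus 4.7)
My plan is to reduce Theorem \ref{mainSVD} to Theorem \ref{main} by passing to the symmetric auxiliaries $M^\top M$ and $MM^\top$. Consider the map
\[
\Phi: \cO^{\svd}_\Sigma \longrightarrow \cO^\bO_{\Sigma^2} \times \cO^\bO_{\Sigma^2}, \qquad M \longmapsto (MM^\top, M^\top M).
\]
A direct computation, using the skew-symmetry of $\Pi_\fko$, shows that if $M$ solves the SVD flow for the polynomial $p$, then the cross-terms involving $M^\top \Pi_\fko p(MM^\top) M$ and $M \, \Pi_\fko p(M^\top M) \, M^\top$ cancel, leaving
\[
\tfrac{d}{dt}(MM^\top) = [MM^\top, \Pi_\fko p(MM^\top)], \qquad \tfrac{d}{dt}(M^\top M) = [M^\top M, \Pi_\fko p(M^\top M)],
\]
so both factors evolve by symmetric Toda flows on the isospectral orbit $\cO^\bO_{\Sigma^2}$ with the same polynomial $p$.

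Next I would analyze the fibers of $\Phi$. Writing $M = Q^\top \Sigma U$, the identity $MM^\top = Q^\top \Sigma^2 Q$ determines $Q$ up to left multiplication by a sign matrix $E_1 \in \cD^{\sgn}$ (since $\Sigma^2$ has simple spectrum), and similarly $U$ is determined up to $E_2 \in \cD^{\sgn}$. Changing $(Q,U)$ to $(E_1 Q, E_2 U)$ alters $M$ to $Q^\top \Sigma (E_1 E_2) U$, so the fiber of $\Phi$ is a principal $\cD^{\sgn}$-set of size $2^n$, parametrised by $E = E_1 E_2$. Since $\dim \cO^{\svd}_\Sigma = n(n-1) = 2 \dim \cO^\bO_{\Sigma^2}$, the equality of dimensions forces $\Phi$ to be a local diffeomorphism onto its image, i.e.~a finite cover.

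The charts are then obtained by pullback. For each $(\pi, \rho, E) \in S_n \times S_n \times \cD^{\sgn}$, let $\cU_{\pi,\rho,E}$ be the sheet of $\Phi^{-1}(\cU^\pi_{\Sigma^2} \times \cU^\rho_{\Sigma^2})$ labelled by the sign $E$, where the label is computed from the canonical orthogonal matrices $Q_0, U_0$ extracted from the Toda charts of Theorem \ref{main}. Define
\[
\phi_{\pi,\rho,E}(M) := (\phi^\pi(MM^\top), \phi^\rho(M^\top M)) \in \RR^N.
\]
This is a diffeomorphism because $\Phi$ restricted to each sheet is a diffeomorphism and the two Toda charts are diffeomorphisms. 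Openness of $\cU_{\pi,\rho,E}$ follows from openness of the Toda chart domains together with the local constancy of the discrete invariant $E(M)$ on $\Phi^{-1}(\cU^\pi_{\Sigma^2}\times \cU^\rho_{\Sigma^2})$. Invariance under the SVD flow follows from Toda invariance of each factor chart plus continuity, hence constancy, of $E(M(t))$. Linearity in the coordinates is inherited: each Toda block satisfies $\dot z_{ij} = (p(\mu_i^2) - p(\mu_j^2)) z_{ij}$, so the stacked vector $v$ obeys $\dot v = C v$ with $C$ diagonal.

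The main obstacle will be the discrete bookkeeping surrounding the sign $E$: making the label unambiguous from the chart-dependent $Q_0, U_0$, verifying that the $2^n$ sheets of $\Phi^{-1}(\cU^\pi_{\Sigma^2} \times \cU^\rho_{\Sigma^2})$ are pairwise disjoint open sets exhausting the preimage, and checking that the full collection $\{\cU_{\pi,\rho,E}\}$ covers $\cO^{\svd}_\Sigma$. These are essentially covering-space arguments for the finite cover $\Phi$ and should be routine once the chart partitioning of $\cO^\bO_{\Sigma^2}$ from Theorem \ref{main} is used carefully.
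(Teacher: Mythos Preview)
Your approach is essentially the same as the paper's: reduce to two symmetric Toda flows via $M\mapsto(MM^\top,M^\top M)$ and pull back the Toda charts, with the sign matrix $E$ indexing the $2^n$ sheets. The paper handles the ``discrete bookkeeping around $E$'' that you flag as the main obstacle not via covering-space language but by writing down the explicit factorization $M=Q_\pi^\top P_\pi^\top\,\Sigma E\,P_\rho U_\rho$ with $Q_\pi,U_\rho\in\bOpos$ and proving its uniqueness using the polar decomposition; this makes the sheet labeling and the covering property concrete without appealing to simple connectivity.
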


\bigskip

   Once the charts are constructed for Toda flows, linearization (Subsection \ref{Todalinearized}) and then conversion  to uniform straight line motion (Subsection \ref{lines}) are easy. Analogous results hold for SVD flows (Section \ref{SVDlinearized}).  In a slight amplification of Theorem \ref{main}, an enlarged change of variables incorporates possible changes of eigenvalues. Integrability (or stronger, superintegrability, as in \cite{Agrotis}) then is a matter of restricting parts of Euclidean space to appropriate subdomains, as exemplified in Section \ref{example} for $3 \times 3$ Jacobi matrices, and then proved in Subsection \ref{lines}. 

\medskip
   In Section \ref{example}, some results are presented for matrices of dimension $3$. Sections \ref{Euclideancharts} and \ref{SVDlinearized} contain the proofs of the theorems above. We also present some geometric results: Toda flows are just uniform straight line motion in Euclidean space and, for monotonic polynomials $p$, are superintegrable in the sense of \cite{Agrotis}. In Section \ref{calculi}, we describe new isospectral flows easily solved in linearizing coordinates which additionally are implemented at discrete times by $QR$-type steps.
   
   It would be interesting to study the case of complex or multiple eigenvalues.

    \bigskip

    \noindent{\bf Acknowledgements}  Tomei thanks Luen-Chau Li for many interesting messages. Saldanha gratefully acknowledges support
    from CNPq (306756/2021-8), and Tomei from CNPq (304742/2021-0) and FAPERJ (E32/2021CNE).

    \section{Notation and $3 \times 3$ examples} \label{example}
    
    \subsection{Notation}
    
    All matrices in this text are real. The matrix $M^\top$ is the transpose  of $M$. The notations $\bGL, \bL, \bU, \bO$ denote respectively the (disconnected) groups of real $n \times n$ invertible, lower, upper triangular and orthogonal matrices.  Their respective connected components of the identity are $\bGLp, \bLp, \bUp$ and $\bSO$. Also,  $\bLone \subset \bLp$ is the subgroup of matrices with diagonal entries equal to one. 
 
    Let $\bGLpos$ and $\bOpos$ denote the subsets of $\bGL$ and $\bO$ of matrices with positive principal minors (minors are the determinants of the sub-blocks obtained from a matrix by considering the entries in the first $k$ rows and columns, $k= 1, \ldots, n$). 
    
    Similarly, $\cM$  is the vector space of real $n \times n $ matrices,  $\cS \subset \cM$ the subspace of symmetric matrices,  $\fkl, \fkl^0, \fku$ and $\fko$ the Lie algebras of $\bL, \bLone, \bU$ and $\bO$.

    The symmetric group of all permutations of $\{1, 2, \ldots, n\}$ is $S_n$. 
    We denote the identity permutation by $\pi_0 \in S_n$.
    Given a permutation $\pi \in S_n$, let $P_\pi$ be the permutation matrix for which $P_\pi e_i = e_{\pi(i)}$, where $(e_i, i=1, \ldots,n)$ is the canonical basis of $\RR^n$. Notice that $P_{\pi_1 \circ \pi_2} = P_{\pi_1} P_{\pi_2}$.
    
     Diagonal matrices form the vector space $\cD \subset \cM$.  Given a diagonal matrix $D = \diag(d_1, d_2,\ldots, d_n)$,  define \[ D^\pi =  P_{\pi}^\top D P_\pi = \diag(d_{\pi(1)},d_{\pi(2)}, \ldots, d_{\pi(n)}), \quad \pi \in S_n \, . \]
   The open, convex subset $\cD^{\pi_0}$ consists of matrices with diagonal entries in strictly descending order. More generally,  set $\cD^\pi =  P_{\pi}^\top \cD^{\pi_0} P_\pi $. 
      
      The group of diagonal matrices with entries equal to $\pm 1$ is $\cD^{\sgn}$.

\subsection{$3 \times 3$ Jacobi and Hessenberg matrices} \label{3x3}
     We illustrate the results in this text first in the familiar context of Jacobi $3 \times 3$ matrices, and then for $3 \times 3$ upper Hessenberg matrices with real, simple spectrum.  A Jacobi matrix $J$ is a real, symmetric tridiagonal matrix for which $J_{i+1, i} = J_{i, i+1} >0$ for $i=1, \ldots, n-1$. A $3 \times 3$ real matrix $H$ is Hessenberg if $h_{31}=0$.

 Jacobi matrices have simple eigenvalues and eigenvectors have nonzero first and last coordinates \cite{Moser}. 
 Let $\RR^3_{\pi_0} \subset \RR^3$ consist of vectors with strictly decreasing entries and $ {\mathbb{S}}^2_+$ be the (strictly) positive quadrant of the unit sphere in $\RR^3$.  Moser considered the diffeomorphism
    \[ \phi:  J \in \{ \hbox{Jacobi matrices of trace zero} \} \to ((\lambda_1, \lambda_2, \lambda_3), \  c ) \in  \RR^3_{\pi_0} \times {\mathbb{S}}^2_+ \, , \]
where $\lambda_1 > \lambda_2 > \lambda_3$ are the eigenvalues of $J$ and $c = (c_1, c_2, c_3) \in {\mathbb{S}}^2_+$ is a normal vector of positive  first coordinates of the respective unit eigenvectors.

\medskip
An alternative argument \cite{DNT} recovers $J$ from $D = \diag(\lambda_1, \lambda_2, \lambda_3) \in \cD^{\pi_0}$ and $c = (c_1, c_2, c_3)^\top$. First decompose   the $3 \times 3$ matrix $V$ below into a product of an orthogonal and an upper triangular matrix,
\[V = \begin{pmatrix} c & D c & D^2 c \end{pmatrix} = Q  R , \quad Q \in \bO , \quad R \in \bUp \, .\]
Set $ J = Q^\top \Lambda Q$: it turns out that the entries $(i, i+1)$ are automatically positive.

\medskip
	Of fundamental importance in what follows is the fact that the principal minors of $Q $ (and hence $Q^\top$) are nonzero. Indeed, up to multiplication by (nonzero) diagonal entries of $R$, such minors equal the minors of $V$, which are easily seen to be nonzero by computing a Vandermonde determinant and using $c_i \ne 0 $. As the columns of $Q^\top$ are eigenvectors of $J$, each may be multiplied by $-1$ yielding another spectral diagonalization of $J$. Instead of using Moser's normalization $c_i >0$, we  require  $Q^\top \in \bOpos$ (and thus $Q \in \bOpos$).
	
	We now present the variables used in this text, introduced for tridiagonal matrices in \cite{LST1}.
	 Let $\cO$ be the vector space of real, symmetric, tridiagonal $3 \times 3$ matrices.
	Suppose $T \in \cO$ for which $T = Q^\top D Q$, where $Q \in \bOpos$, $D \in \cD^{\pi_0}$.
	The (unique) LU-decomposition of $Q \in \bOpos$ is   $Q = L U $ for $L \in \bLone, U \in \bUp$ (Proposition \ref{fatoracoes}). We then have
    \[ T = U^{-1} L^{-1} D L U \, , \]
   so that $B = L^{-1} D L = U T U^{-1}$ is lower triangular, form the first equality, and upper Hessenberg, from the second. Thus $B$ is lower bidiagonal, and $\diag B = D$.

   From Section \ref{profiles},   the {\it bidiagonal variables} (see \cite{LST1}) induce a diffeomorphism 
    \[ \tilde \phi_e : \cO \cap \{Q^\top D Q, Q \in \bOpos , D \in \cD^{\pi_0}\} \ \to \RR^3_{\pi_0} \times \RR^2 \ , \quad T \mapsto B \ .\] Here, we identify the matrix $B$ with a vector with diagonal entries, in $\RR^3_{\pi_0}$, and a second vector with the two nontrivial off-diagonal entries $b_{21}$ and $b_{32}$. Moreover, Jacobi matrices correspond to matrices  for which $b_{21}, b_{32} > 0 $.

\medskip

   Different permutations of diagonal entries lead to diagonal matrices \[ D^\pi =  P_{\pi}^\top D P_\pi = \diag(\lambda_{\pi(1)},\lambda_{\pi(2)},\lambda_{\pi(3)}), \quad \pi \in S_3 \, , \]
   where $S_3$ is the symmetric group. From Corollary \ref{charts-profiles}, the set of matrices in $\cO$  with simple spectrum  is a manifold covered by  charts indexed by permutations,
   \[ \tilde \phi_{\pi} :  \cO \cap \{ Q^\top D^\pi Q, Q \in \bOpos, D \in \cD^{\pi_0},   D^\pi =  P_\pi^\top D P_\pi\}  \to  \RR^3_\pi \times \RR^2 .  \]
   Here 
   $\RR^3_\pi = \{ (x_{\pi(1)}, x_{\pi(2)},x_{\pi(3)}) \ | \ (x_1, x_2, x_3) \in \RR^3_{\pi_0} \}$.
   Also, $B^\pi = \tilde \phi_{\pi}(T)$ is obtained from $T$ as  in the case $\pi = e$, the trivial permutation:
   \[ \tilde \phi_{\pi}(U^{-1} L^{-1} D^\pi L U ) = L^{-1} D^{\pi} L = B^\pi \, .\]
   
   Similarly, the {\it isospectral set}  of symmetric matrices with a tridiagonal profile with spectrum equal to the spectrum of a fixed matrix $D$ of simple spectrum admit charts
    \[  \phi_{\pi} :  \cO \cap \{ Q^\top D^\pi Q, Q \in \bOpos \}  \to   \RR^2 ,  \]
    obtained by dropping the first coordinates of the map  $\tilde \phi_{\pi}$.
   Each chart is a dense set in the tridiagonal, symmetric, isospectral manifold. Figure 	\ref{ilailah} represents two chart domains, for $\pi_0$ and $\pi_1 = (23)$ in cycle notation, for spectrum $\{ 7, 5, 4\}$. In both cases, the domain is the interior of the polygon with 16 edges. In each case, there exists a continuous map from the compact polygon, including the boundary, to the isospectral manifold; this map is smooth in the interior of the polygon and in the interior of the edges.   
   The full isospectral manifold of tridiagonal symmetric matrices, a torus with two holes (\cite{LST1, Tomei2}), is obtained by identifying segments along the boundary of either polygon. Vertex labels correspond to diagonal matrices: the only vertex in either chart domain is at the center, $\diag(7,5,4)$ and $\diag(7,4,5)$ respectively.  Interior horizontal and vertical segments correspond to matrices with some subdiagonal entry equal to zero. Thus, in the second example, the horizontal segment joining the center $(7,4,5)$ and $(4,7,5)$ correspond to matrices $T$ for which $y_{33}=5, y_{32} = y_{23} = 0$. The four quadrants consist of matrices with constant signs along the subdiagonal entries. The quadrant  $++$ contains all Jacobi matrices. More generally, matrices with nonzero subdiagonal entries belong to all charts domains. 
   The domain of Moser's chart, instead, is the positive quadrant of either chart domain.

   \begin{figure}[ht]
   		\includegraphics[width=0.9\textwidth]{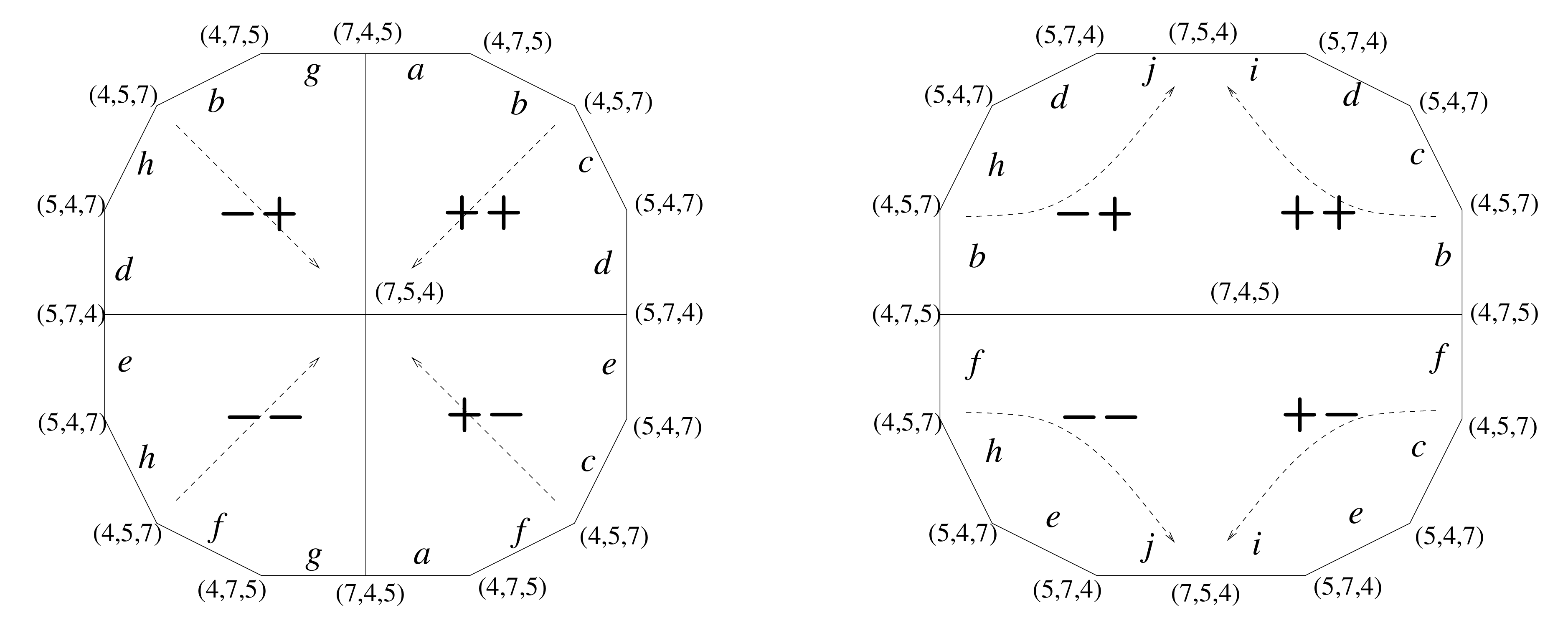}
   	\caption{Jacobi matrices and more }
   	\label{ilailah}
   \end{figure}

Toda flows keep invariant the interior of each quadrant, each edge, each vertex.
As is well known, as $t \to \infty$, standard Toda (the choice $p(x) = x$
in Equation~(\ref{Toda})) starting with a Jacobi matrix
converges to $(7,5,4)$.
Some orbits are represented as dotted arrows in the pictures.
This limit point does not belong to the domain of Moser's chart,
defined on symplectic manifolds,
for which Hamiltonian flows cannot have attractors.
It is however the center of the chart domain associated with $\pi_0$,
and does not belong to the chart domain associated with $\pi_1$.
Our charts provide a fine description of
the asymptotic behavior of numerical algorithms
involving shift strategies \cite{LST2}.

\medskip
The evolution of bidiagonal variables under the standard Toda flow (here, to simplify matters, $\pi = e$, the trivial permutation) is given in Theorem~\ref{linearToda},
\[ \lambda_i'(t) = 0 \, , \quad b_{21}'(t) =  (\lambda_2 - \lambda_1)\, b_{21}(t)\, , \quad  b_{32}'(t) =  (\lambda_3 - \lambda_2)\, b_{32}(t) \, , \]
from which diagonal convergence in the future is immediate, as is the fact that the equations for $b_{21}$ and $b_{32}$ are linear and decoupled. Other geometric properties will be proved in Subsection \ref{lines} from the simplicity of such formulas.

\medskip
We now consider  the vector space $\cH$ of Hessenberg matrices, by
   extending our variables (Proposition \ref{charts}) to matrices in $\cH$  with real, simple spectrum. Linearizing variables consist of {\it pairs} of strictly lower triangular matrices, the second one preserving the Hessenberg {\it profile}, defined in Subsection \ref{profiles},
\[  \tilde \phi_{\pi}^{H}(X^{-1} \, D^\pi \, X )= \left( D^\pi,  \begin{pmatrix} 0 & 0 & 0 \\ y_{21} & 0 &0 \\ y_{31} & y_{32} & 0 \end{pmatrix}, \begin{pmatrix} 0 & 0 & 0 \\ z_{21} & 0 & 0 \\ 0 & z_{32} & 0 \end{pmatrix}    \right) , \quad X \in \bGLpos .   \]
The matrices $Y$ and $Z$ follow  the notation of Equation~(\ref{resumo}). Again from Theorem~\ref{linearToda}, the evolution under the Toda flow is
\[ \lambda_i'(t) = 0 \, , \quad y_{ij}'(t) = 0 \,  \quad z_{21}'(t) =  (\lambda_2 - \lambda_1)\, z_{21}(t)\, , \quad  z_{32}'(t) =  (\lambda_3 - \lambda_2)\, z_{32}(t) \, . \]
For symmetric tridiagonal matrices, $y_{ij}=0$, $z_{21} = b_{21}$ and $ z_{32}= b_{32}$.


    \section{Linearizing Toda flows} \label{Euclideancharts}

    \medskip

     For an open subset $U$ of a real manifold $M$ of dimension $n$, an {\it Euclidean chart} is a diffeomorphism $\phi:U \to \RR^n$.

\subsection{Euclidean charts for isospectral manifolds}

In this subsection  we fix
$ D = \diag(\lambda_1, \ldots, \lambda_n) \in \cD^{\pi_0}$.
Define symmetric and general  {\it isospectral sets},
 \begin{equation} \label{isospectral}
\cO^\bO_D \e \{ Q^\top \,  D\, Q , Q \in \bO\}  \subset \cS\, , \quad \cO_D^\bGL \e \{ X^{-1} \, D \,X, X \in \bGL\} \subset \cM \, .
\end{equation}

The isospectral sets $\cO^\bO_D$ and $\cO_D^\bGL$ are smooth manifolds and they are orbits for the adjoint action of $\bO$ and $\bGL$, respectively.
We define atlases with Euclidean charts  linearizing the Toda flows and some generalizations.

Recall that $D^\pi = P_\pi^\top D P_\pi = \diag(\lambda_{\pi(1)}, \ldots, \lambda_{\pi(n)})$ and set
\begin{equation}\label{cartas}
	\begin{aligned}
 \cU_{D^\pi}^{\bO} &\e \{ Q^\top \ D^\pi \  Q,  Q \in \bOpos \}  \subset \cO^\bO_D,\\ 
\cU_{D^\pi}^\bGL &\e \{ X^{-1} \ D^\pi \ X,  X \in \bGLpos \} \subset \cO_D^\bGL .
\end{aligned} \end{equation}

Let $M \in \cU_{D^\pi}^\bGL$. From  Proposition \ref{fatoracoes} (Schur),
\begin{equation} \label{YpD}
	M = Q^\top (Y +D^\pi) Q \, , \quad Q \in \bOpos, \quad Y \in \fkl^0 \, .
	\end{equation}
Consider the LU factorization $Q = LU$, for $  L \in \bLone , U \in \bUp $, and write
\begin{equation} \label{ZpD}
L^{-1}(Y+ D^\pi) L  = 	Z + D^\pi  \, , \quad Z \in \fkl^0 , 
\end{equation}
so that $M = U^{-1} (Z + D^\pi) U$.

 \begin{prop}  [Euclidean charts for $\cO^\bO_D$ and $\cO_D^\bGL$] \label{charts} The subsets $\cU_{D^\pi}^{\bO} \subset \cO^\bO_D$ and $\cU_{D^\pi}^\bGL \subset \cO_D^\bGL$ are open and dense. They cover $\cO^\bO_D $ and $\cO_D^\bGL$:
 \[ \cO^\bO_D \e \bigcup_{\pi \in S_n} \, \cU_{D^\pi}^{\bO} \ , \quad \cO_D^\bGL \e \bigcup_{\pi \in S_n} \, \cU_{D^\pi}^\bGL \, . \]	
 	For $Y$ and $Z$ as in Equations \eqref{YpD} and \eqref{ZpD}, the maps 
	\[ \phi_{D^\pi}^{\bO} : \cU_{D^\pi}^{\bO} \to \fkl^0 \, , \ \  S \mapsto Z,   \quad \quad \phi_{D^\pi}^\bGL : \cU_{D^\pi}^\bGL \to \fkl^0 \times \fkl^0 \, , \ \  M \mapsto ( Y, Z)   \]
	are diffeomorphisms. 
A matrix $M \in \cU^\bGL_\pi$ is symmetric  (resp. upper triangular) if and only if $Y=0$ (resp $Z=0$).
\end{prop}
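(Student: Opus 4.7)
I start from the surjective submersions $\Phi_\pi^{\bGL}(X) = X^{-1}D^\pi X$ and $\Phi_\pi^{\bO}(Q) = Q^\top D^\pi Q$ onto the two orbits. Their fibers are the centralizers of $D^\pi$ (all invertible diagonal matrices in $\bGL$, all sign diagonal matrices in $\bO$, by simplicity of the spectrum). Left-multiplying a preimage by a diagonal multiplies each principal $k\times k$ minor by the product of its first $k$ diagonal entries, so sign (or magnitude) adjustment shows $\Phi_\pi^{\bGL}(X) \in \cU^{\bGL}_{D^\pi}$ iff all principal minors of $X$ are nonzero, and analogously in the orthogonal case. These conditions cut out open and dense subsets (complements of algebraic hypersurfaces), so the chart domains are open and dense in their orbits. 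For the cover I invoke the pivoted LU factorization: for every $X \in \bGL$ there exists a permutation $\pi$ with $P_\pi^\top X \in \bLone \cdot \bU$, forcing its principal minors to be nonzero; combined with $X^{-1}DX = (P_\pi^\top X)^{-1} D^\pi (P_\pi^\top X)$ and a final sign-diagonal adjustment (sign diagonals commute with $D^\pi$), every orbit element lies in some $\cU^{\bGL}_{D^\pi}$.

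\textbf{Smooth bijection.} For $M \in \cU^{\bGL}_{D^\pi}$, Proposition~\ref{fatoracoes} produces a unique $(Q, Y)$ as in \eqref{YpD} with $Q \in \bOpos$, and then a unique LU factorization $Q = LU$ with $L \in \bLone$, both depending smoothly on $M$; so $\phi_{D^\pi}^{\bGL}$ and its symmetric specialization are well-defined smooth maps. To invert I rebuild $M$ from $(Y, Z)$ in four smooth steps. First, the conjugation action of $\bLone$ on the affine space of lower triangular matrices with fixed diagonal $D^\pi$ is free (the centralizer of $D^\pi$ in $\bLone$ is trivial by simplicity) and the dimensions match, so a unique $L \in \bLone$ carries $Y + D^\pi$ to $Z + D^\pi$. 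Second, to find $U \in \bUp$ making $Q := LU$ orthogonal, I use the Cholesky factorization $L^\top L = R^\top R$ with $R \in \bUp$ and set $U = R^{-1}$. The $k$-th principal minor of $Q = LR^{-1}$ equals the product of those of $L$ (all equal to $1$) and of $R^{-1}$ (equal to $1/(r_{11}\cdots r_{kk}) > 0$), so $Q \in \bOpos$. Finally $M := Q^\top(Y + D^\pi) Q$.

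\textbf{Symmetric and upper triangular.} The implications $Y = 0 \Rightarrow M$ symmetric and $Z = 0 \Rightarrow M = U^{-1} D^\pi U$ upper triangular (product of upper triangular factors) are immediate from the formulas. For the converses I produce an independently built decomposition and appeal to uniqueness. If $M$ is symmetric, the spectral theorem supplies $M = \tilde Q^\top D^\pi \tilde Q$ with $\tilde Q \in \bOpos$, a valid instance of \eqref{YpD} with $Y = 0$, forcing the chart's $Y$ to vanish. If $M \in \cU^{\bGL}_{D^\pi}$ is upper triangular, a column-by-column analysis of the principal minors shows that $X \in \bGLpos$ in $M = X^{-1} D^\pi X$ must preserve the standard coordinate flag, hence $X \in \bUp \subset \bGLpos$; then $U_0 := X$ realizes $M = U_0^{-1} D^\pi U_0$, an instance of the decomposition with $Z = 0$, so uniqueness forces $Z = 0$.

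\textbf{Main obstacle.} The only ingredient that is not a direct invocation of Proposition~\ref{fatoracoes} is the covering assertion, which reduces to finding, for every invertible matrix, a row permutation that makes all its principal minors nonzero -- the pivoted LU / Bruhat input. This is standard but essential; everything else is a disciplined reading-off of the uniqueness of the Schur and LU factorizations.
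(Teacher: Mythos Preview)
Your argument for openness, density, the covering, and the bijectivity of the charts is correct and follows the paper's route (PLU for the cover; recover $L$ from $(Y,Z)$, then $Q$ via a QR/Cholesky step). The paper finds $L$ by explicitly solving $L(Z+D^\pi)=(Y+D^\pi)L$ along successive subdiagonals, while you invoke a free-action/dimension count; that is fine, though you should say ``centralizer of $Y+D^\pi$'' (not just of $D^\pi$) and note that freeness makes every orbit open in the connected affine space $D^\pi+\fkl^0$, forcing a single orbit.

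The real weak point is the last paragraph. Both characterizations follow in one line from the formulas you already have: $M=Q^\top(Y+D^\pi)Q$ gives $M$ symmetric $\Leftrightarrow$ $Y+D^\pi$ symmetric $\Leftrightarrow$ $Y=0$; and $M=U^{-1}(Z+D^\pi)U$ gives $M$ upper triangular $\Leftrightarrow$ $Z+D^\pi=UMU^{-1}$ upper triangular $\Leftrightarrow$ $Z=0$. Your detour through the flag argument is correct up to the last step, but the final ``uniqueness forces $Z=0$'' is a gap: the paper's Remark right after this proposition explicitly notes that the factorization $M=U^{-1}(Z+D^\pi)U$ with $U\in\bU$, $Z\in\fkl^0$ is \emph{not} unique. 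It is salvageable (two such factorizations differ by a diagonal conjugation, which fixes $Z=0$), but as written the appeal to uniqueness is unjustified, and in any case the direct argument above makes the whole flag analysis unnecessary.
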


The charts provide an atlas for the smooth manifolds $\cO^\bO_D$ and $\cO_D^\bGL$.
We refer to the values of the charts, such as $Y$ and $Z$ or their entries, as  {\it variables}. In Subsection \ref{Todalinearized}, we  obtain formulas for  flows in $\cO_D^\bO$ and $\cO_D^\bGL$  in $(Y,Z)$ variables.

\begin{proof} In light of Equation \eqref{cartas} and the openness of the subsets $\bOpos \subset \bO$ and $\bGLpos \subset \bGL$, chart domains are open. Density is a consequence of the density of $\cD^{\sgn} \bOpos  = \{ EQ , E \in \cD^{\sgn} , Q \in \bOpos\} \subset \bO$ and $\cD^{\sgn} \bGLpos  \subset \bGL$. The fact that the charts domain cover the respective manifolds follow from the (PLU) decomposition of a matrix (see  the Appendix).
	
	 We prove the smooth invertibility of the charts. Let $D^\pi, Y$ and $Z$ as above: we recover $M$ (the recovery of $S$ is the case $Y=0$). We first show that there is a unique $  L \in \bLone$ for which $Z + D^\pi = L^{-1}(Y+ D^\pi) L$. Write $L(Z + D^\pi) = (Y+ D^\pi) L$ and use the simplicity of the spectrum of $D^\pi$: equating entries $(2,1)$, $(3,2), \ldots,(n,n-1)$ in this equation, one obtains the $n-1$ entries  $L_{(k, k-1)}$. Repeat the process for the subsequent subdiagonals
	 until $L$ is computed.
Now factor $L = Q R,$ for $Q \in \bOpos$, $R \in \bUp$, and thus $Q = LU$ for $U = R^{-1} \in \bUp$. Set $M = Q^\top (Y + D^\pi) Q = U^{-1} (Z + D^\pi) U$, as in Equations \eqref{YpD} and \eqref{ZpD}.

Equation $M = Q^\top (Y + D^\pi) Q $  implies that $M$ is symmetric  if and only if $Y=0$. Similarly,  $M = U^{-1} (Z + D^\pi) U$ is upper triangular if and only if $Z = 0$.
\end{proof}

\begin{remark}
The factorization $M = U^{-1} (Z + D) U,\  U \in \bU, Z \in \fkl^0$ is not unique, and hence it does not retrieve $Z$:  for any invertible diagonal matrix $E$,
\[ M = (U^{-1} E^{-1}) (E  ( Z + D) E^{-1}) (E U) = (U^{-1} E^{-1}) ( E Z E^{-1} + D) (E U) \, . \]
\end{remark}

\medskip
 It is convenient to specify the dependency of a matrix on its eigenvalues. Consider the open sets of matrices with simple, real spectrum:
\begin{equation}\label{grandesconjuntos} \cO^\bO = \bigcup_{ D  \in \cD^{\pi_0}} \cO^\bO_{D }  \subset \cS \, , \qquad \cO^\bGL = \bigcup_{ D  \in \cD^{\pi_0}}    \cO^\bGL_D \subset \cM\, .
	\end{equation}
The first subset is dense, while the second is not: matrices with simple spectrum and complex eigenvalues belong to the interior of the complement. We construct charts in  $\cO^\bO$ and $\cO^\bGL$: the chart domains 
 \[ \cU^\bO_\pi \e \bigcup_{D \in \cD^{\pi_0}}  \cU_{D^\pi}^{\bO}  \subset \cO^\bO \; , 
 \quad \cU^\bGL_\pi \e \bigcup_{D \in \cD^{\pi_0}} \, \cU_{D^\pi}^{\bGL}  \subset \cO^\bGL\]
 clearly cover the sets
 \[
 \cO^\bO \e \bigcup_{\pi \in S_n} \, \cU_{\pi}^{\bO} \ , 
 \quad \cO^\bGL \e \bigcup_{\pi \in S_n} \, \cU_{\pi}^\bGL \, . \]

\begin{prop} [Linearizing variables] \label{linearizingvariables} For $\pi \in S_n$, the maps
	\[ \tilde \phi_{\pi}^{\bO} : \cU^\bO_\pi \to \cD^{\pi} \times \fkl^0 \, , \ \   S \mapsto (D^\pi,Z),  \] \[ \tilde \phi_{\pi}^\bGL : \cU^\bGL_\pi \to \cD^{\pi} \times \fkl^0 \times \fkl^0 \, , \ \  M \mapsto (D^\pi, Y, Z)   \]
	are diffeomorphisms.
\end{prop}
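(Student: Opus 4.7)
The plan is to reduce the statement to Proposition~\ref{charts}, which already handles the case of fixed eigenvalues, and then to check smooth dependence in the extra spectral parameter. Since $\cU^\bGL_\pi$ is a \emph{disjoint} union over $D\in\cD^{\pi_0}$ of the slices $\cU^\bGL_{D^\pi}$ (two distinct diagonal matrices in $\cD^{\pi_0}$ give disjoint isospectral orbits), and analogously for $\cU^\bO_\pi$, each map in the statement is automatically a bijection onto its target as soon as it is well-defined and Proposition~\ref{charts} applies slice by slice. So the work consists of (a) showing the map is well-defined and smooth, and (b) exhibiting a smooth inverse.

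For well-definedness and smoothness, I would first show that the eigenvalue map $M\mapsto D^\pi$ is smooth on $\cU^\bGL_\pi$. Since matrices in this open set have simple real spectrum, the ordered eigenvalues are smooth functions of $M$ (standard implicit-function-theorem argument: each $\lambda_i$ is a simple root of the characteristic polynomial, hence depends smoothly on its coefficients), so the assignment $M\mapsto D\in\cD^{\pi_0}$ and then $D\mapsto D^\pi=P_\pi^\top D P_\pi$ is smooth. Given smoothness of $M\mapsto D^\pi$, the remaining assignment $M\mapsto(Y,Z)$ is constructed exactly as in Proposition~\ref{charts}: one performs a Schur decomposition and an $LU$ factorization, both of which depend smoothly on the input in the relevant open sets ($\bGLpos$, $\bOpos$), so the composition is smooth on $\cU^\bGL_\pi$. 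The symmetric case $\tilde\phi_\pi^\bO$ is the same argument with $Y=0$.

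For the inverse, I would follow the reconstruction recipe in the proof of Proposition~\ref{charts} but track dependence on $D^\pi$. Given $(D^\pi,Y,Z)$ with $D^\pi\in\cD^\pi$, the unique $L\in\bLone$ satisfying $L(Z+D^\pi)=(Y+D^\pi)L$ is obtained by solving a triangular system subdiagonal by subdiagonal; the coefficients are differences $\lambda_{\pi(i)}-\lambda_{\pi(j)}$, which are nonzero and depend smoothly (in fact rationally) on $D^\pi$ by simplicity of the spectrum. Hence $L$ is a smooth function of $(D^\pi,Y,Z)$. Then the $QR$ factorization $L=QR$ with $Q\in\bOpos$ and $R\in\bUp$, and the resulting assembly $M=Q^\top(Y+D^\pi)Q$, are all smooth operations on the corresponding open sets. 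This produces a two-sided smooth inverse.

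The only genuinely non-routine point is ensuring that the linear system for $L$ remains nondegenerate uniformly as $D^\pi$ varies; but this is exactly the content of $D\in\cD^{\pi_0}$, which keeps all eigenvalue gaps bounded away from zero on any compact subset, and hence the smoothness extends globally over the union. Everything else is either a direct consequence of Proposition~\ref{charts} applied slicewise or a standard smoothness statement about eigenvalues, $QR$ and $LU$ factorizations on open groups.
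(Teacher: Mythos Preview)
Your proposal is correct and follows essentially the same approach as the paper: both construct the inverse by solving $L(Z+D^\pi)=(Y+D^\pi)L$ for $L\in\bLone$ subdiagonal by subdiagonal (using simplicity of the spectrum), then $QR$-factoring $L$ and assembling $M=Q^\top(Y+D^\pi)Q$. The paper's treatment of forward smoothness is terser --- it invokes the nonvanishing of the eigenvalue gradients and transversality of $\cD^{\pi_0}$ to the orbits rather than your implicit-function-theorem argument on the characteristic polynomial --- but these are equivalent, and the reconstruction of the inverse is identical.
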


\begin{proof} \medskip
	
	As the eigenvalues are simple, the gradients of the eigenvalues are nonzero, $\cD^{\pi_0}$ is transversal to $\cO^\bO$ and  $\cO^\bGL$ and the map is well defined: we consider its inverse. Assume that $D^\pi, Z $ and $Y $ are given, and search for $L  \in \bLone$ with $L  (Z + D^\pi ) \e (Y  + D^\pi ) L  $. Uniqueness and existence follow from the simplicity of the spectrum of $D^\pi $: entries of $L $ are computed along subdiagonals starting from the one closest to the diagonal, as in the proof of Proposition \ref{charts}. Now factor $L  = Q  R ,$ for $Q  \in \bOpos$, $R  \in \bUp$ and set $M = Q^\top (Y  + D^\pi ) Q $.
\end{proof}

\subsection{Toda flows in linearizing variables} \label{Todalinearized}

\bigskip
We obtain the push forward of a vector field in $\cU_{D^\pi}^\bGL \subset \cM$ to $\cD^{\pi} \times \fkl^0 \times \fkl^0 $. More precisely, for a variation $\dot M$ of a matrix $M \in \cU_{D^\pi}^\bGL$, we compute the  variations $\dot D^\pi, \dot Y$ and $\dot Z$ of the linearizing variables $(D^\pi, Y,Z)$ of Proposition \ref{linearizingvariables}.

Let $\pi \in S_n$ and $M \in \cU_{D^\pi}^\bGL$. From Proposition \ref{fatoracoes} (Schur), $M = Q^\top (Y + D^\pi) Q$, for $ Q \in \bOpos, \ Y \in \fkl^0$.

\begin{prop}[Splitting  the tangent space] \label{splitting} Let $M \in \cU_{D^\pi}^\bGL$ and $D^\pi, Q, Y$ as above. Then, for $\dot M \in \cM$, there is a unique decomposition
	\[  \dot M \e  \big( Q^\top \dot D^\pi Q\big) + \big( Q^\top \dot Y Q\big) + \big([ M ,  \ Q^\top \dot Q ]\big)   \, ,\]
	where the three summands belong to the  (real) vector spaces  (dependent of $M$),
	\[\ C =  \{ Q^\top E Q , \ E \in \cD\} , \  V \e  \{Q^\top N Q ,\ N \in \fkl0\} ,\  W \e  \{ [ M ,  A ]\, , \  A \in \fko \} ,   \]
	such that $\cM \e C \oplus V  \oplus W $.
	The maps
	\[ \dot M \in \cM \mapsto (\dot D^\pi\, ,  \dot Y \, ,  \ Q^\top \dot Q  ) \in \cD \times  \fkl^0  \times \fko  , \quad  B \in \fko \mapsto [Y+D^\pi,B] \in \tilde W\]
	are isomorphisms.
	\medskip
\end{prop}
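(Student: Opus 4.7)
The plan is to derive the stated decomposition as a differentiation identity, and then upgrade it to a direct sum by combining a dimension count with a single injectivity lemma. First I would differentiate the defining identity $M = Q^\top(Y+D^\pi)Q$. Setting $A := Q^\top \dot Q$, the constraint $Q^\top Q = I$ gives $\dot Q^\top Q + Q^\top \dot Q = 0$, so $A \in \fko$ and $\dot Q^\top = -A Q^\top$. Substituting,
\[\dot M = \dot Q^\top(Y+D^\pi)Q + Q^\top(\dot D^\pi + \dot Y)Q + Q^\top(Y+D^\pi)\dot Q = -AM + Q^\top \dot D^\pi Q + Q^\top \dot Y Q + MA,\]
which rearranges to the stated formula $\dot M = Q^\top \dot D^\pi Q + Q^\top \dot Y Q + [M, A]$.

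Next I would count dimensions. One has $\dim C \le \dim \cD = n$, $\dim V \le \dim \fkl^0 = n(n-1)/2$, and $\dim W \le \dim \fko = n(n-1)/2$, summing to at most $n^2 = \dim \cM$. Hence the direct-sum decomposition $\cM = C \oplus V \oplus W$, the existence and uniqueness of the splitting of an arbitrary $\dot M$, and both isomorphism statements reduce to showing that the linear map
\[\Phi:\cD \times \fkl^0 \times \fko \to \cM,\qquad (\dot D^\pi, \dot Y, A) \mapsto Q^\top \dot D^\pi Q + Q^\top \dot Y Q + [M, A]\]
is injective. Conjugating by $Q$, which is a linear isomorphism of $\cM$, this is equivalent to the following assertion: if $B = QAQ^\top \in \fko$ and $\dot D^\pi + \dot Y + [T, B] = 0$ with $T := Y + D^\pi$, then each summand vanishes.

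The heart of the proof, and the main obstacle, is showing that $[T, B] \in \fkl$ forces $B = 0$. I would inspect the strictly upper-triangular entries of $[T,B]$ and iterate inward from the corner $(1,n)$. Because $T$ is lower triangular with diagonal entries $\lambda_{\pi(1)}, \ldots, \lambda_{\pi(n)}$, direct expansion yields, for $i < j$,
\[[T, B]_{ij} = (\lambda_{\pi(i)} - \lambda_{\pi(j)})\, B_{ij} + \sum_{k < i} T_{ik} B_{kj} - \sum_{k > j} B_{ik} T_{kj},\]
and every $B$-entry on the right has off-diagonal distance strictly larger than $j - i$. Setting the strictly upper part of $[T,B]$ to zero and performing reverse induction on $j - i$, starting from $[T,B]_{1n} = (\lambda_{\pi(1)} - \lambda_{\pi(n)}) B_{1n} = 0$, the simplicity of the spectrum forces every strictly upper entry of $B$ to vanish; skew-symmetry then gives $B = 0$. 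Once $B = 0$, the equation collapses to $\dot D^\pi + \dot Y = 0$, and since $\cD \cap \fkl^0 = 0$ both summands vanish, establishing injectivity of $\Phi$. Applied by itself, the same argument shows injectivity of $B \mapsto [T, B]$ on $\fko$; by dimension count this map is a bijection onto $\tilde W := Q\,W\,Q^\top$, which is the remaining isomorphism claim.
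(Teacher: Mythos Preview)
Your proof is correct and follows essentially the same approach as the paper: differentiate the Schur identity, conjugate by $Q$ to reduce to the triple $(\cD, \fkl^0, \tilde W)$, and then show that $[Y+D^\pi, B]$ lower triangular forces $B=0$ by sweeping the strictly upper entries from the corner inward using simplicity of the spectrum. Your explicit dimension count combined with injectivity of $\Phi$ is in fact a slightly cleaner packaging than the paper's argument, which establishes only the pairwise trivial intersections $\tilde V \cap \tilde W = 0$ and $\tilde C \cap \tilde W = 0$ and leaves the reader to observe that the same subdiagonal computation handles $(\tilde C \oplus \tilde V) \cap \tilde W$.
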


\begin{proof} As $M = Q^\top (Y+D^\pi)Q$, $ Q \in \bOpos, \ Y \in \fkl^0$, taking derivatives,
	\begin{align*} \dot M &\e \dot Q ^\top (Y+D^\pi)Q + Q^\top \dot D^\pi Q + Q^\top \dot YQ  + Q^\top (Y+D^\pi)\dot Q \\
	&=  Q^\top \dot D^\pi Q + Q^\top \dot Y Q  + [ M , Q^\top \dot Q] 
	\end{align*}
As $Q^\top \dot Q \in \fko$, the three summands belong to $C, V$ and $W$ respectively. We  verify uniqueness of the splitting.	Write
	\[ W \e  \{ [ M ,  A ]\, , \  A \in \fko \} \e  \{ Q^\top [ Y+D^\pi ,  B ] Q\, , \  B \in \fko \}  \, . \]
	Getting rid of the conjugation by $Q$, we show that
	$\cM \e \tilde C \oplus \tilde V \oplus \tilde W$ for
	\[ \tilde C =  \cD \, , \quad \tilde V \e  \fkl_0 \, , \quad \tilde W \e  \{ [ Y+D^\pi ,  B ]\, , \ B \in \fko  \}.\]	
	Let $\tilde L \in \fkl^0$: we first show that for  an unknown $B \in \fko$, the equation
	\begin{equation}\label{transv}
		[Y + D^\pi , B] = \tilde L
	\end{equation}  implies $B=0$.  As in Proposition \ref{charts}, equate entries  along subdiagonals and use that $D^\pi$ has simple spectrum, to conclude that the strictly upper triangular entries of $B$ (and hence, its strictly lower triangular entries too) are equal to zero: $B$ is a diagonal matrix. Since $B \in \fko$, we must have $B=0$.
	
	In particular, the intersection of $\tilde V$ and $\tilde W$ is trivial. Similarly, $\tilde C$ and $\tilde W$ have trivial intersection and $B \in \fko \mapsto [Y+D^\pi,B] \in \tilde W$ is an isomorphism.
\end{proof}

Consider projections $\Pi_\fko$ and $\Pi_\fku$ associated with the direct sum $\cM \e \fko \oplus \fku$. Recall Equation \eqref{Toda} for the Toda flows associated with a polynomial $p$.

\begin{theo} [Toda flows linearize] \label{linearToda} The set $\cU_{D^\pi}^\bGL$ is invariant under Toda flows $\dot M = [M, \Pi_\fko p(M)]$. In linearizing variables $(D^\pi, Y, Z)$,
	\begin{equation}\label{evolucao} \dot D^\pi \e 0  , \quad  \dot Y = 0  ,  \quad \dot Z = [Z, - p(D^\pi)]  \, . 
		\end{equation}
	In particular, the solution $M(t)$ is globally defined: if $M(0) \in \cU_{D^\pi}^\bGL$, then, for all $t \in \RR$, we have $M(t) \in \cU_{D^\pi}^\bGL$. 
\end{theo}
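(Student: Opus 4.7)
My plan is to let Proposition \ref{splitting} do as much of the work as possible, reducing the full theorem to a single algebraic identification. The first move is to apply the splitting directly to the Toda vector field: since $\Pi_\fko p(M) \in \fko$, the vector $[M, \Pi_\fko p(M)]$ already lies in the subspace $W$ of Proposition \ref{splitting}. By the uniqueness of the decomposition
\[ \dot M = Q^\top \dot D^\pi Q + Q^\top \dot Y Q + [M, Q^\top \dot Q] , \]
the $C$- and $V$-components must vanish, yielding $\dot D^\pi = 0$ and $\dot Y = 0$; and the $\fko \to W$ isomorphism of Proposition \ref{splitting} identifies the $W$-component as $Q^\top \dot Q = \Pi_\fko p(M)$.

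The heart of the proof is the equation for $Z$. Differentiating $Z + D^\pi = L^{-1}(Y + D^\pi) L$ (using $\dot Y = 0$) gives $\dot Z = [Z + D^\pi, \alpha]$ where $\alpha := L^{-1}\dot L \in \fkl^0$. To identify $\alpha$, differentiate $Q = LU$, obtaining $\Pi_\fko p(M) = Q^\top \dot Q = U^{-1}\alpha U + U^{-1}\dot U$; conjugating by $U$,
\[ U \bigl( \Pi_\fko p(M) \bigr) U^{-1} = \alpha + \dot U U^{-1} . \]
Because $\alpha \in \fkl^0$ and $\dot U U^{-1} \in \fku$, the matrix $\alpha$ is the strictly lower triangular part of the left-hand side. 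The key observation is then to rewrite $\Pi_\fko p(M) = p(M) - \Pi_\fku p(M)$: since $M = U^{-1}(Z + D^\pi) U$, we have $U p(M) U^{-1} = p(Z + D^\pi)$, which is lower triangular with diagonal $p(D^\pi)$, while $U \Pi_\fku p(M) U^{-1}$ remains in $\fku$ and contributes nothing to the strict lower part. Hence $\alpha = p(Z + D^\pi) - p(D^\pi)$, and
\[ \dot Z = [Z + D^\pi, p(Z + D^\pi) - p(D^\pi)] = -[Z + D^\pi, p(D^\pi)] = [Z, -p(D^\pi)] , \]
using that $Z + D^\pi$ commutes with $p(Z + D^\pi)$ and that $D^\pi$ commutes with $p(D^\pi)$.

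Global existence and the invariance of $\cU_{D^\pi}^\bGL$ are then immediate: in the linearizing coordinates the system is linear with constant diagonal coefficients (each entry of $Z$ evolves independently with rate $p(\lambda_{\pi(i)}) - p(\lambda_{\pi(j)})$), so solutions are defined for all $t \in \RR$ and $(D^\pi, Y, Z)$ stays in $\cD^{\pi_0} \times \fkl^0 \times \fkl^0$. I expect the only real obstacle to be the identification of $\alpha$: the raw matrix $U \Pi_\fko p(M) U^{-1}$ is not obviously tractable, and the computation succeeds only because conjugation by $U$ simultaneously eliminates both copies of $U$ inside $p(M)$, collapsing it to the clean lower triangular polynomial $p(Z + D^\pi)$.
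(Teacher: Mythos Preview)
Your proof is correct and follows essentially the same route as the paper's: apply Proposition~\ref{splitting} to read off $\dot D^\pi=\dot Y=0$ and $Q^\top\dot Q=\Pi_\fko p(M)$, differentiate $Q=LU$ to obtain $L^{-1}\dot L=\Pi_{\fkl^0}\bigl(U\,\Pi_\fko p(M)\,U^{-1}\bigr)$, use $\Pi_\fko p(M)=p(M)-\Pi_\fku p(M)$ together with $Up(M)U^{-1}=p(Z+D^\pi)$ to get $L^{-1}\dot L=p(Z+D^\pi)-p(D^\pi)$, and finally differentiate $Z+D^\pi=L^{-1}(Y+D^\pi)L$. The only slip is cosmetic: in your last line $(D^\pi,Y,Z)$ should stay in $\cD^{\pi}\times\fkl^0\times\fkl^0$ rather than $\cD^{\pi_0}\times\fkl^0\times\fkl^0$ (or simply note that $D^\pi$ is fixed and $(Y,Z)$ stays in $\fkl^0\times\fkl^0$).
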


\begin{remark} \label{formula}
		The equation for $Z$ is trivially solvable,
	as  $D(t)$ is a constant matrix, consisting of the spectrum of $M_0$ in some order:
	\[ Z(t) = \exp(tp(D^\pi))\ Z_0\, \exp(-tp(D^\pi)) \, , \;  z_{ij}(t) = z_{ij}(0) \, \exp (t(p(\lambda_{\pi(i)}) -p(\lambda_{\pi(j)} ))) . \]
Thus, the Toda flows {\it decouple} in such variables with multiplicative factors which  depend on the (constant) eigenvalues.
\end{remark}

\begin{proof} To simplify ideas, we consider $\pi = \pi_0$ and write $D^\pi = D$. Take $M \in \cU_{D^\pi}^\bGL$. Set $A = A(M) =  \Pi_\fko p(M) \in \fko$. As $\dot M = [M, A] \in W$, the unique splitting of $\dot M$ in the subspaces $C, V, W$  in Proposition \ref{splitting}, 
	\[  \dot M \e  \big( Q^\top \dot D^\pi Q\big)+ \big( Q^\top \dot Y Q\big)  + \big([ M , \Pi_{\fko} \ Q^\top \dot Q ]\big)  \, ,\]
	gives
	\[ Q^\top \dot E Q = 0 = \dot E \, , \quad  Q^\top \dot Y Q = 0 = \dot Y  \, , \quad Q^\top \dot Q = \Pi_{\fko} \ Q^\top \dot Q = A . \]
	We now compute $\dot Z$, for $Z + D = L^{-1}(Y+D)L$ by first computing $\dot L$. 
	Define projections $\Pi_{\fkl^0}$ and $\tilde \Pi_\fku$ associated with the direct sum $ \cM= \fklzero \oplus \fku$; the reader should not confuse $\tilde \Pi_\fku$ with $\Pi_\fku$, as in Equation \eqref{TodaJacobi}.
	
	As $Q = LU$
	for $Q \in \bOpos, L \in {\bf L^1}, U \in {\bf U}$, we have $\dot Q = \dot L U + L \dot U$
	and then 
	\[ L^{-1} \, \dot L \e  L^{-1} \,\dot Q \,U^{-1} - \dot U U^{-1} \e
	\Pi_\fklzero \, \big( L^{-1}\, \dot Q \, U^{-1} \big) \, . \]
	As $\dot Q = Q A$ and $U M U^{-1} = Z + D = L^{-1}(Y + D) L$ (see Equation \eqref{ZpD}), we have
	\begin{equation} \label{LL} \begin{aligned} 
		L^{-1} \, \dot L &\e \Pi_\fklzero \ \big( L^{-1}\, Q\, A  \ U^{-1} \big)    \e \Pi_\fklzero \, \big( U\, A  \ U^{-1} \big)  	\\
	 &\e \Pi_\fklzero \, \big( U\ (\Pi_\fko p(M))  \, U^{-1} \big)   \e \Pi_\fklzero \, \big( U\,  ( (p(M) -  \Pi_\fku \, p(M))  \ U^{-1} \big) \\
	&\e \Pi_\fklzero \, \big( U\  p(M) \,  \ U^{-1} \big) \e \Pi_\fklzero \,   p(U \ M\ U^{-1})
	\e \Pi_\fklzero \,   p(Z + D) \\
	&\e p(Z+D) - \diag p(Z+D)\e p(Z+D) - p(D) \, . \end{aligned} \end{equation}
	Taking derivatives of $Z + D = L^{-1}(Y + D) L$, one obtains, from $\dot Y = \dot D^\pi = 0$, \begin{equation} \label{Zdot}
		 \dot Z \e \dot{(Z + D)} \e [ Z + D, L^{-1} \dot L]\e [Z+D, p(Z+D) - p(D)] \\
	   \e [Z,  - p(D)] \, .
	\ \end{equation}
	The argument implies that solutions $M(t)$, in linearizing variables, are given by Equation \eqref{evolucao}. 	As  $p(D(t))$ is a constant diagonal matrix, $Z(t) \in \fkl$: the solution is globally defined  in   $ \cU_{D^{\pi_0}}^\bGL \subset \cU_{\pi_0}^\bGL$. Computations are similar for other permutations.  
	\end{proof}

\begin{remark} The theorem restricts to symmetric matrices in $\cU_{D^\pi}^\bO$: they are a special case when $Y=0$.
	\end{remark}

\begin{remark} The first formulae in Equation \eqref{Zdot} are similar to the original equation for a Toda flow. By the end of the equation, the equation for $\dot Z$ became a linear, decoupled, set of differential equations with constant coefficients.
\end{remark}

\begin{remark} The entries of $L$ also linearize, and for symmetric matrices $M$, decouple. Indeed, from Equation~(\ref{LL}),
\[
	L^{-1} \ \dot L \e   \   p(Z + D) - p(D) \e     L^{-1} p(Y + D) L - p(D) \, , \]
and thus, for constant matrices $Y$ and $D$ under Toda evolutions,
\[ \dot L \e p(Y+D)\, L - L\, p(D) \, .\]
In particular, by Proposition \ref{charts}, if $M$ is real symmetric, we have $Y = 0$ and $\dot L = [L, -p(D)]$. The matrices $Z$ however has an additional advantage as a coordinate system, which we explore in Subsection \ref{profiles}: they relate to profiles in a natural fashion.
\end{remark} 

\begin{remark} In numerical analysis, it is frequent to consider time dependent evolutions $\dot M = [M, \Pi_\fko p(M;t)]$, where the coefficients of $p$ depend on $t$ (a typical example is changing the shift parameter at each $QR$ iteration \cite{Parlett, LST2}).  The formulae in the statement of the theorem are easily adapted to handle this more general situation: in  linearizing variables $( D^\pi, Y, Z)$,
\[   \dot D^\pi \e 0  , \quad \dot Y = 0 \, , \quad \dot Z = [Z, - p(D^\pi;t)] \, \, , \]
\[ Z(t) = \exp\left(\int_0^t\  p(D^\pi;s) \ ds \right)\ Z_0\ \exp\left(-\int_0^t p(D^\pi;s) \ ds \right) \, . \]
\end{remark}

\bigskip
We use the above formulas to compute limits along an orbit. Notice that the result applies to nonsymmetric matrices, provided the initial condition has real, simple spectrum.
Recall that $D = D^{\pi_0} = \diag(\lambda_1, \ldots, \lambda_n)$, $\lambda_1 > \ldots> \lambda_n$.

\begin{prop} [Asymptotics]

		Consider an initial condition $M(0)\in \cU_{D^{\pi}}^\bGL$, $\pi \in S_n$. 
			Let $p: \RR \to \RR$ take different values at the eigenvalues of $D$  such that $p(\lambda_{\pi(1)}) > \ldots > p(\lambda_{\pi(n)})$. We then have
	\begin{equation} \label{asymptotics}
		 \lim_{t \to \infty} M(t) = (\phi_{\pi}^\bGL)^{-1} (D^{\pi}, Y(0), 0) , \quad \quad \phi_{\pi}^\bGL(M(0)) = (D^{\pi},  Y(0), Z(0)) \, .
		 \end{equation}
	 \end{prop}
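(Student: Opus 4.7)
The plan is to read off the asymptotics directly from the explicit linearized dynamics given by Theorem~\ref{linearToda} and the subsequent Remark~\ref{formula}, then transfer the limit back to $\cO^\bGL$ by continuity of the inverse chart.

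First, I would record that Theorem~\ref{linearToda} says that $\cU_{D^\pi}^\bGL$ is invariant under the Toda flow and that, in linearizing variables,
\[ \dot D^\pi \e 0 , \quad \dot Y \e 0, \quad \dot Z \e [Z,-p(D^\pi)] . \]
Hence the spectrum $D^\pi$ and the $Y$-component are constants of motion: $D^\pi(t) \equiv D^\pi$ and $Y(t) \equiv Y(0)$ for all $t \in \RR$. So the entire question reduces to understanding the long-time behavior of $Z(t) \in \fkl^0$.

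Next, I would invoke the closed-form solution from Remark~\ref{formula}:
\[ z_{ij}(t) \e z_{ij}(0)\,\exp\!\bigl(t\,(p(\lambda_{\pi(i)}) - p(\lambda_{\pi(j)}))\bigr). \]
Because $Z \in \fkl^0$ is strictly lower triangular, only entries with $i > j$ are relevant. The hypothesis that $p(\lambda_{\pi(1)}) > \cdots > p(\lambda_{\pi(n)})$ then guarantees $p(\lambda_{\pi(i)}) - p(\lambda_{\pi(j)}) < 0$ for every such pair $(i,j)$. Therefore $z_{ij}(t) \to 0$ as $t \to \infty$ for every $i > j$, i.e.\ $Z(t) \to 0$ in $\fkl^0$.

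Collecting the three pieces, we have shown
\[ \tilde\phi_\pi^\bGL(M(t)) \e (D^\pi, Y(0), Z(t)) \;\longrightarrow\; (D^\pi, Y(0), 0) \qquad (t \to \infty), \]
and the limit lies in $\cD^\pi \times \fkl^0 \times \fkl^0$, which is the image of $\tilde\phi_\pi^\bGL$. By Proposition~\ref{linearizingvariables} the inverse $(\tilde\phi_\pi^\bGL)^{-1}$ (and hence $(\phi_\pi^\bGL)^{-1}$ with the spectrum fixed) is a diffeomorphism and in particular continuous, so
\[ \lim_{t\to\infty} M(t) \e (\phi_\pi^\bGL)^{-1}(D^\pi, Y(0), 0), \]
which is Equation~\eqref{asymptotics}. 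There is no real obstacle here: all the work has been done in setting up the linearizing variables and deriving the decoupled exponential ODEs; the only thing to watch is that the limit point still lies in the chart domain $\cU_{D^\pi}^\bGL$, which is immediate since $(D^\pi, Y(0), 0)$ is in the codomain of the diffeomorphism.
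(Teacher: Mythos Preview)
Your argument is correct and follows essentially the same route as the paper's proof: use Remark~\ref{formula} to see that each $z_{ij}(t)$ decays (since $i>j$ forces $p(\lambda_{\pi(i)})-p(\lambda_{\pi(j)})<0$), note that $D^\pi$ and $Y$ are constant, and pull the limit back through the chart. The paper is terser but makes exactly the same moves.
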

 
 \begin{proof} From the expression for $Z(t)$ in Remark \ref{formula}, we have $Z(t) \to 0$, as $Z(t) \in \fkl$, and $Y(t), D(t)$ remain fixed. From Proposition \ref{charts}, $(D(0), Y(0), 0)$ corresponds to an upper triangular matrix $M(0)= U^{-1} (0 + D(0)) U$. 
 \end{proof}

\begin{remark} In particular, equilibria of the Toda flows are upper triangular (resp. diagonal) matrices for non-symmetric (resp. symmetric) initial conditions. A common scenario is $p$ being a strictly increasing function and $\pi = \pi_0$, as for the standard Toda flow $p(x) = x$. 
\end{remark}

It is an interesting fact that the asymptotics of such convergent orbits --- necessarily conserved quantities of the flow --- are naturally related to the variables $Y$. 
The asymptotic behavior of the standard Toda flow provides a projection $(D, Y, Z) \mapsto (D,  Y, 0)$ on the domain $\cU_{{\pi_0}}^\bGL$.

\subsection{Profiles} \label{profiles}

In Section \ref{example} we claimed that the isospectral sets of $3 \times 3$ tridiagonal symmetric matrices and the larger
isospectral sets of  upper Hessenberg matrices are manifolds to which the Toda flow is tangent,  and that the isospectral manifolds of tridiagonal symmetric matrices are covered by linearizing charts for the Toda flow.
We now extend the result for isospectral matrices with more general {\it profiles}  than the Jacobi and upper Hessenberg examples.

Given a set of pairs $S = \{ (i,j) , \ i \ge j, \  i, j \in \{1, 2, \ldots,n\} \}$, the {\it profile} $\p$ generated by $S$ is the set of pairs
\[ \p = \{ (i,j) \ | \ \exists \  (i', j') \in \p , \ i \le i', j \ge j'\} \, .\]
Let $V_\p^\cM \subset \cM$ be the subspace spanned by the matrices $E_{ij} = e_i \otimes e_j$ for $(i,j) \in \p$ and set $V_\p^\cS = V_\p^\cM \cap \cS$. Thus, for example,  the subspaces of real upper Hessenberg  and tridiagonal symmetric matrices equal the subspaces $V_\p^\cM$ and $V_\p^\cS$ for the profile $\p$ generated by the set $S = \{(2,1),(3,2), \ldots,(n,n-1)\}$.

An important algebraic property of the subspaces $V_\p^\cM$ is that they are modules for the adjoint action of the Lie algebra $\fku$. It follows from the expression $\dot M \e [ M , \Pi_\fko \,  p(M)]$ that Toda flows (\ref{Toda}) are tangent to any vector space of $\cM$ which is an $\fko$-module, for instance to the vector space $\cS$
of symmetric matrices. That is the reason why Toda flows are defined on the compact isospectral manifolds $\cO^\bO_D$.
But it also follows from the alternative expression $\dot M \e [ M , -\Pi_\fku \,  p(M)]$ that Toda flows are tangent to any subspace of $\cM$ which is an $\fku$-module, for instance the subspaces $V_\p^\cM$. That is, if a matrix belongs to $V_\p^\cM$ then its evolution by the Toda flow stays in $V_\p^\cM$. Thus for
any fixed simple spectrum and  profile the subsets $\cO^\bO_D \cap V_\p^\cS$ and $\cO_D^\bGL \cap V_\p^\cM$ are invariant by the Toda flow.

Unlike the case of $\cO^\bO_D$ and $\cO^\bGL_D$, which are orbits of group actions, there is
no quick argument to show that $\cO^\bO_D \cap V_\p^\cS$ and $\cO_D^\bGL \cap V_\p^\cM$ are manifolds. It is important to assume that
	the spectrum is simple: the isospectral set associated with real, $3 \times 3$ symmetric tridiagonal matrices with  eigenvalues $0$ and $1$, where 0 is of multiplicity 2, is a figure eight (two circles joined at a point). Symmetric tridiagonal matrices with fixed simple spectrum were shown to be a manifold in \cite{Tomei1}.
It is true that the orbits $\cO_D^\bO$ and $\cO_D^\bGL$  are transverse
to $V_\p^\cS$ and $V_\p^\cM$, but we prefer to check  transversality  working in the linearizing variables for the orbits. We  show that linearizing variables restrict well to the subspaces $V_\p^\cS$ and $V_\p^\cM$.

\begin{prop}  [Euclidean charts for $\cO^\bO_{D^\pi} \cap V_\p^\cS$ and $\cO_{D^\pi}^\bGL \cap V_\p^\cM$] \label{cartasprofiles} The sets $\cU_{D^\pi}^{\bO}\cap V_\p^\cS$ (resp. $\cU_{D^\pi}^\bGL \cap V_\p^\cM$) are open, dense subsets of $\cO^\bO_D\cap V_\p^\cS $ (resp. $\cO_D^\bGL \cap V_\p^\cM$). The restrictions
	\[ \phi_{D^\pi}^{\bO} : \cU_{D^\pi}^{\bO}\cap V_\p^\cS \to \fkl^0 \cap V_\p^\cS\, , \quad S \mapsto Z,  \]
	\[  \phi_{D^\pi}^\bGL : \cU_{D^\pi}^\bGL \cap V_\p^\cM\to \fkl^0 \times (\fkl^0\cap V_\p^\cM)   \, , \quad  M \mapsto ( Y, Z)   \]
	are  diffeomorphisms,  thus providing atlases for $\cO^\bO_{D^\pi} \cap V_\p^\cS$ and $\cO_{D^\pi}^\bGL\cap V_\p^\cM$ with Euclidean charts.
\end{prop}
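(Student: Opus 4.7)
The main strategy is to show that the Euclidean chart of Proposition~\ref{charts} restricts to a diffeomorphism on the profile-constrained subset, leveraging the fact already signalled in the paper that $V_\p^\cM$ is a module for the adjoint action of $\fku$. From the relation $[\fku, V_\p^\cM] \subseteq V_\p^\cM$ together with the surjectivity of $\exp\colon \fku \to \bUp$ (the group $\bUp$ is connected, simply connected and nilpotent), I would conclude via $\exp(X)\, N\, \exp(-X) = \exp(\operatorname{ad}_X)(N)$ that $U V_\p^\cM U^{-1} \subseteq V_\p^\cM$ for every $U \in \bUp$. A direct index chase on $(UMU^{-1})_{ij}$ for $(i,j) \notin \p$ is an alternative route that avoids the exponential.

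Feeding this invariance into the factorization $M = U^{-1}(Z + D^\pi)U$ of Proposition~\ref{charts}, where $U \in \bUp$ and $Z \in \fkl^0$, and using that $D^\pi \in V_\p^\cM$ (diagonal pairs $(i,i)$ always lie in $\p$), I would read off the chain of equivalences
\[
M \in V_\p^\cM \;\Longleftrightarrow\; Z + D^\pi \in V_\p^\cM \;\Longleftrightarrow\; Z \in \fkl^0 \cap V_\p^\cM.
\]
This simultaneously shows that $\phi_{D^\pi}^\bGL$ restricted to $\cU_{D^\pi}^\bGL \cap V_\p^\cM$ takes values precisely in $\fkl^0 \times (\fkl^0 \cap V_\p^\cM)$, and that the inverse construction of Proposition~\ref{charts}, fed a pair from this target, returns an $M$ lying in $V_\p^\cM$. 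The symmetric case is the further restriction $Y = 0$, which is the characterization $M \in \cS \iff Y = 0$ already provided by Proposition~\ref{charts}. Smoothness of the restricted charts follows because the restriction of a diffeomorphism to the preimage of a linear subspace is a diffeomorphism onto that subspace; as a by-product this establishes that $\cO_D^\bGL \cap V_\p^\cM$ and $\cO_D^\bO \cap V_\p^\cS$ are smooth manifolds of the expected dimensions, sidestepping the direct transversality verification the paper prefers to defer.

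Openness of $\cU_{D^\pi}^\bGL \cap V_\p^\cM$ in $\cO_D^\bGL \cap V_\p^\cM$ is inherited from openness in the ambient manifold. The main technical obstacle I anticipate is density: the ambient argument of Proposition~\ref{charts} perturbs $X$ in $M = X^{-1}DX$, which in general does not preserve $V_\p^\cM$. I would handle this entirely inside a linearizing chart. Any $M \in \cO_D^\bGL \cap V_\p^\cM$ lies in some $\cU_{D^{\pi'}}^\bGL$ by the covering property, with coordinates $(Y', Z') \in \fkl^0 \times (\fkl^0 \cap V_\p^\cM)$ by the equivalence above; the additional condition of belonging to $\cU_{D^\pi}^\bGL$ is cut out by non-vanishing of finitely many principal minors of the corresponding conjugating matrix, which are polynomial functions of $(Y',Z')$ and not identically zero on $\fkl^0 \times (\fkl^0 \cap V_\p^\cM)$ (a generic large choice satisfies them). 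A small perturbation of $(Y', Z')$ inside this linear subspace therefore produces an arbitrarily close approximation of $M$ lying in $\cU_{D^\pi}^\bGL \cap V_\p^\cM$, which yields the desired density.
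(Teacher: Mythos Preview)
Your core argument coincides with the paper's: both hinge on the observation that $V_\p^\cM$ is stable under conjugation by upper triangular matrices, and then the factorization $M = U^{-1}(Z + D^\pi)U$ from Proposition~\ref{charts} immediately yields $M \in V_\p^\cM \iff Z \in \fkl^0 \cap V_\p^\cM$. The paper simply asserts the $\bU$-invariance (as a consequence of the $\fku$-module property discussed just before the proposition) and stops once the restricted diffeomorphism is established; it does not prove density at all.

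You go further by attempting a density argument, but the step asserting that the minor polynomials are ``not identically zero on $\fkl^0 \times (\fkl^0 \cap V_\p^\cM)$'' is not justified by the parenthetical ``a generic large choice satisfies them'', and in fact the density claim fails for some profiles. Take the profile generated by the diagonal pairs $(k,k)$ alone: then $V_\p^\cM = \fku$ and $V_\p^\cS = \cD$. In the symmetric case $\cO^\bO_D \cap \cD = \{D^\sigma : \sigma \in S_n\}$ is a set of $n!$ isolated points and each $\cU_{D^\pi}^\bO \cap \cD$ is the single point $\{D^\pi\}$, open but certainly not dense for $n \ge 2$. In the nonsymmetric case $\cO_D^\bGL \cap \fku$ has $n!$ connected components (one for each diagonal ordering) and each $\cU_{D^\pi}^\bGL \cap \fku$ is exactly one of them (the locus $Z = 0$ in that chart, by Proposition~\ref{charts}), again not dense. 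So density as stated requires at minimum an extra hypothesis on the profile; a correct proof would have to exhibit, for each pair $\pi,\pi'$, a profile-constrained matrix in $\cU_{D^\pi}^\bGL \cap \cU_{D^{\pi'}}^\bGL$, which your sketch does not supply.
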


\begin{proof} We prove the result for $\cO^\bO_D \cap V_\p^\cS$, the other being similar. Consider $\pi \in S_n$ for which $S \in \cU_{D^\pi}^{\bO}\cap V_\p^\cS$ and write, for the usual specifications,
	\[ S = Q^\top D Q = U^{-1} L^{-1} D L U = U^{-1} ( Z + D) U . \]
	Then $ U S U^{-1} = Z + D$. Since $S \in V_\p^\cM$ and $V_\p^\cM$ is invariant under conjugation by $\bU$, we must also have $U S U^{-1} = Z + D \in V_\p^\cM$, so that $Z \in 	 \fkl^0 \cap V_\p^\cS$. Therefore the restriction of $\phi_{D^\pi}^{\bO}$ sends $\cU_{D^\pi}^{\bO}\cap V_\p^\cS$ diffeomorphically onto  $\fkl^0 \cap V_\p^\cS$. Since $\fkl^0 \cap V_\p^\cS$ is a vector space we have constructed Euclidean charts for $\cO^\bO_D \cap V_\p^\cS$.
\end{proof}

Going back to the  $3 \times 3$ example, we consider upper Hessenberg matrices. The  chart domain 
$\cU_{D^\pi}^\bO \cap V_\p^\cS$ is diffeomorphic to $ \{(z_{21},z_{32} )\} \sim \RR^2$ and $\cU_{D^\pi}^\bGL \cap V_\p^\cM$ is diffeomorphic to
$\{( y_{21}, y_{31}, y_{32} ,z_{21}, z_{32})\} \sim \RR^5$.

\medskip
As an immediate consequence,  charts restrict well to profiles.

\begin{coro}  [Toda flows on $\cO^\bO_{D^\pi} \cap V_\p^\cS$ and $\cO_{D^\pi}^\bGL \cap V_\p^\cM$ linearize] \label{charts-profiles}  Symmetric and non-symmetric Toda flows  leave the sets $\cU_{D^\pi}^\bO\cap V_\p^\cS$ and $\cU_{D^\pi}^\bGL \cap V_\p^\cM$ invariant and linearize in variables   $(D^\pi, Z), Z\in \fkl\cap V_\p^\cM$ and
$(D^\pi, Y, Z), \ Y \in \fkl^0, \ Z \in \fkl^0\cap V_\p^\cM $.	
\end{coro}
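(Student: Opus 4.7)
The plan is to combine Proposition \ref{cartasprofiles} (which gives the Euclidean chart for the profiled isospectral set) with Theorem \ref{linearToda} (which gives the Toda evolution in the ambient linearizing variables), and then observe that the linearized ODEs automatically respect the profile.

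First, I would establish invariance. The paragraph preceding Proposition \ref{cartasprofiles} already shows $V_\p^\cM$ is preserved by any Toda flow, via the Lax form $\dot M = [M, -\Pi_\fku\, p(M)]$ together with the fact that $V_\p^\cM$ is an $\fku$-module; similarly $\cS$ is preserved via the $\fko$-form. Theorem \ref{linearToda} independently gives invariance of the chart domain $\cU_{D^\pi}^\bGL$ (resp.\ $\cU_{D^\pi}^\bO$). Intersecting two invariant sets gives invariance of $\cU_{D^\pi}^\bGL \cap V_\p^\cM$ and $\cU_{D^\pi}^\bO \cap V_\p^\cS$.

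Next, I would transport the Toda flow to the restricted chart. By Proposition \ref{cartasprofiles}, the restriction of $\phi_{D^\pi}^\bGL$ sends $\cU_{D^\pi}^\bGL \cap V_\p^\cM$ diffeomorphically onto $\fkl^0 \times (\fkl^0 \cap V_\p^\cM)$, and analogously in the symmetric case. Since the chart itself is just the restriction of the ambient chart of Proposition \ref{charts}, the pushforward of the Toda vector field is simply the restriction of the pushforward computed in Theorem \ref{linearToda}, namely $\dot D^\pi = 0$, $\dot Y = 0$, $\dot Z = [Z, -p(D^\pi)]$.

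Finally, I would check that this ODE preserves the subspace $\fkl^0 \cap V_\p^\cM$, closing the loop. Because $p(D^\pi)$ is diagonal, the bracket acts entrywise: $\dot z_{ij} = (p(\lambda_{\pi(i)}) - p(\lambda_{\pi(j)}))\, z_{ij}$, so zero entries remain zero and the profile support is preserved for all time. The main (minor) subtlety is just this last check: a priori one might worry that the restricted flow could leave $V_\p^\cM$, but the explicit diagonal form of the bracket rules this out at once. In the symmetric case ($Y = 0$), the same computation applies verbatim, completing both claims simultaneously.
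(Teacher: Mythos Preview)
Your proposal is correct and follows essentially the same approach as the paper: the paper's proof is the one-line ``The result follows from Theorem \ref{linearToda} and Proposition \ref{cartasprofiles},'' and your argument is simply a careful unpacking of that sentence, including the invariance-by-intersection observation and the explicit entrywise check that the bracket with a diagonal matrix preserves the profile.
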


\begin{proof}
The result follows from Theorem \ref{linearToda} and Proposition \ref{cartasprofiles}.
\end{proof}

\bigskip
Clearly linearizing variables (Proposition \ref{linearizingvariables}) also restrict to profiles.

\subsection{Toda flows as straight line motions, superintegrability} \label{lines}

We construct a  cover of the  manifolds $\cO^\bO \smallsetminus \cD$ and $\cO^\bGL \smallsetminus \fku$ by  open half-spaces with the following properties. Each half-space is invariant under any Toda flow. For a monotonic polynomial $p$, the associated Toda flow  becomes a straight line motion (with the same velocity along any line). 

Choose a permutation $\pi \in S_n$ and an index $(i_0, j_0), i_0 > j_0$. For a matrix $M \in \cU_{\pi}^\bGL \subset \cO^\bGL$, consider linearizing variables $(D^\pi,Y,Z)$. For a sign $\sigma \in \{+1, -1\}$, define the open set 
\[ \cH_{\pi, (i_0, j_0), \sigma}  \e \{ M \in \cU_{\pi}^\bGL \, | \; \sigma z_{i_0 j_0} > 0 \}  \ . \]
The disjoint union $\cH_{\pi, (i_0, j_0), +1} \cup \cH_{\pi, (i_0, j_0), -1} \subset \cU_{\pi}^\bGL$ is an open, dense subset.

Given a monotonic polynomial $p$, define new variables $w_{ij}, i > j ,$ in  $ \cH_{\pi, (i_0, j_0), \sigma}$:
\[ w_{i_0, j_0} \e \frac{ \log | z_{i_0j_0}|}{ p(\lambda_{i_0}) - p(\lambda_{j_0})} , \] 
\[ w_{i, j} \e \frac{z_{ij}}{| z_{i_0 j_0}|^{\varepsilon}} ,\quad\quad (i,j) \ne (i_0,j_0) \, , \quad \varepsilon = \frac{p(\lambda_i) - p (\lambda_j)}{p(\lambda_{i_0}) - p (\lambda_{j_0})} \ .\]
Coordinates $(D^\pi, Y, W)$ provide a diffeomorphism from $ \cH_{\pi, (i_0, j_0), \sigma}$ to $\cD^\pi \times \fkl \times \fkl$. Figure \ref{fig:H} gives a description of these variables.

\begin{figure}[ht]
	\def\svgwidth{75mm}
	\centerline{
\begingroup%
  \makeatletter%
  \providecommand\color[2][]{%
    \errmessage{(Inkscape) Color is used for the text in Inkscape, but the package 'color.sty' is not loaded}%
    \renewcommand\color[2][]{}%
  }%
  \providecommand\transparent[1]{%
    \errmessage{(Inkscape) Transparency is used (non-zero) for the text in Inkscape, but the package 'transparent.sty' is not loaded}%
    \renewcommand\transparent[1]{}%
  }%
  \providecommand\rotatebox[2]{#2}%
  \newcommand*\fsize{\dimexpr\f@size pt\relax}%
  \newcommand*\lineheight[1]{\fontsize{\fsize}{#1\fsize}\selectfont}%
  \ifx\svgwidth\undefined%
    \setlength{\unitlength}{557.62765528bp}%
    \ifx\svgscale\undefined%
      \relax%
    \else%
      \setlength{\unitlength}{\unitlength * \real{\svgscale}}%
    \fi%
  \else%
    \setlength{\unitlength}{\svgwidth}%
  \fi%
  \global\let\svgwidth\undefined%
  \global\let\svgscale\undefined%
  \makeatother%
  \begin{picture}(1,0.95846111)%
    \lineheight{1}%
    \setlength\tabcolsep{0pt}%
    \put(0,0){\includegraphics[width=\unitlength,page=1]{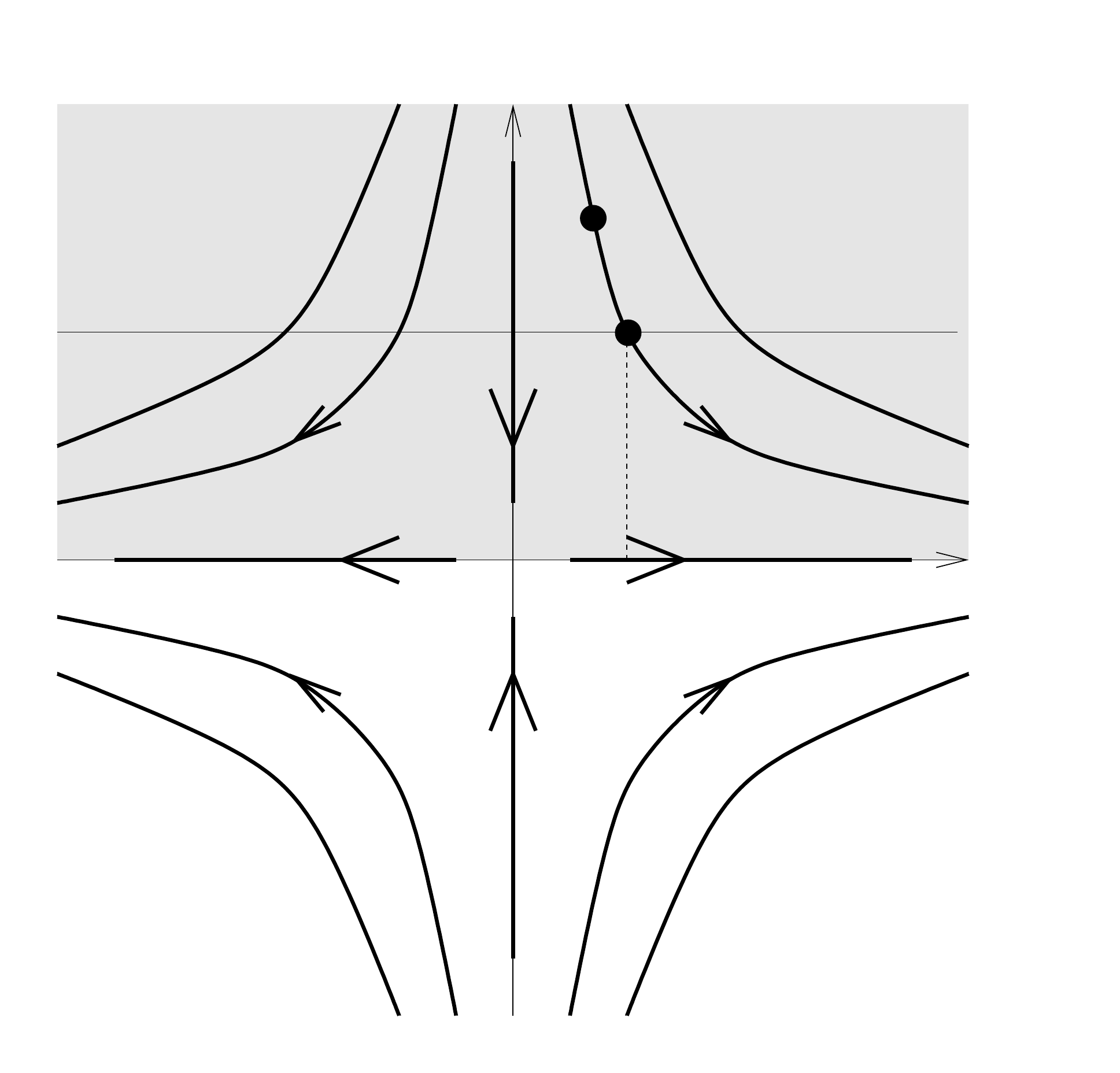}}%
    \put(0.43282167,0.89579134){\color[rgb]{0,0,0}\makebox(0,0)[lt]{\lineheight{1.25}\smash{\begin{tabular}[t]{l}$z_{i_0,j_0}$\end{tabular}}}}%
    \put(0.89070376,0.46334715){\color[rgb]{0,0,0}\makebox(0,0)[lt]{\lineheight{1.25}\smash{\begin{tabular}[t]{l}$z_{i,j}$\end{tabular}}}}%
    \put(0,0){\includegraphics[width=\unitlength,page=2]{H.pdf}}%
  \end{picture}%
\endgroup%
}
	\caption{The open set $\cU_\pi$ or, more precisely, a slice for fixed spectrum; we use $z_{ij}$ as coordinates.
		The half-space $\cH_{\pi, (i_0, j_0), +1}$ is represented by the upper half-plane. The vertical axis corresponds to $z_{i_0,j_0}$ and $w_{i_0,j_0}$ is a function of $z_{i_0, j_0}$. The horizontal axis corresponds to all other  $z_{i,j}$. The coordinate $w_{i,j}$ is obtained by following an orbit until it meets the horizontal hyperplane $z_{i_0, j_0} = 1$ and then reading the new value of $z_{i,j}$. }
	
	\label{fig:H}
\end{figure}

\begin{theo} [Straight line motion] Under the Toda flow associated with a monotonic polynomial $p$,
	\[ \dot D^\pi = 0 , \quad \dot Y = 0 , \quad \dot  w_{i_0, j_0} = 1 , \quad \dot  w_{i, j} = 0 , \, (i,j) \ne (i_0,j_0) \ . \]
	Thus such a flow is super-integrable in $ \cH_{\pi, (i_0, j_0), \sigma}$.
	\end{theo}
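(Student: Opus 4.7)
The plan is to plug the explicit solutions from Remark \ref{formula} directly into the definitions of $w_{ij}$ and read off the evolution. Let $\mu_k$ denote the $k$-th diagonal entry of $D^\pi$ and set $\alpha_{ij} = p(\mu_i) - p(\mu_j)$. Monotonicity of $p$ combined with simplicity of the spectrum gives $\alpha_{ij} \ne 0$ for all $i \ne j$; in particular the denominator $\alpha_{i_0 j_0}$ defining $w_{i_0,j_0}$ and each exponent $\varepsilon = \alpha_{ij}/\alpha_{i_0 j_0}$ are meaningful throughout $\cH_{\pi,(i_0,j_0),\sigma}$, so the coordinates $(D^\pi,Y,W)$ make sense.

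By Theorem \ref{linearToda} and Remark \ref{formula}, along any Toda orbit one has $\dot D^\pi=0$, $\dot Y=0$, and $z_{ij}(t) = z_{ij}(0)\,e^{t\alpha_{ij}}$. Since this preserves the sign of $z_{i_0 j_0}$, the half-space $\cH_{\pi,(i_0,j_0),\sigma}$ is flow-invariant. Taking the logarithm of the modulus of the distinguished entry gives
\[
\log|z_{i_0 j_0}(t)| = \log|z_{i_0 j_0}(0)| + t\,\alpha_{i_0 j_0},
\]
so dividing by $\alpha_{i_0 j_0}$ yields $w_{i_0 j_0}(t) = w_{i_0 j_0}(0) + t$, i.e.\ $\dot w_{i_0 j_0} = 1$. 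For any other pair $(i,j)$, the identity $\varepsilon\,\alpha_{i_0 j_0} = \alpha_{ij}$ built into the definition of $\varepsilon$ makes the factor $e^{t\alpha_{ij}}$ in the numerator of $w_{ij}(t) = z_{ij}(t)/|z_{i_0 j_0}(t)|^{\varepsilon}$ cancel with $|z_{i_0 j_0}(t)|^{\varepsilon} = |z_{i_0 j_0}(0)|^{\varepsilon}\,e^{t\alpha_{ij}}$ in the denominator, leaving $w_{ij}(t) = w_{ij}(0)$ and thus $\dot w_{ij}=0$.

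Super-integrability is then immediate: the coordinates $D^\pi$, $Y$, and the $w_{ij}$ for $(i,j) \ne (i_0,j_0)$ form a family of $N-1$ functionally independent first integrals on the $N$-dimensional half-space $\cH_{\pi,(i_0,j_0),\sigma}$, and the remaining coordinate $w_{i_0 j_0}$ is a uniformising parameter along the flow.

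There is essentially no obstacle: the variables $w_{ij}$ were engineered so that the exponential evolution of the $z_{ij}$ becomes either a translation or a constant. The only point to check with care is that monotonicity of $p$ (rather than merely $p$ separating the eigenvalues) is the exact hypothesis needed to ensure \emph{every} $\alpha_{ij}\neq 0$, hence that \emph{all} the $w_{ij}$ are well defined and $(D^\pi,Y,W)$ is a diffeomorphism; once this is noted, the verification is a one-line cancellation of exponentials.
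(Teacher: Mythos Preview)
Your proof is correct and follows exactly the approach the paper intends: the paper's own proof is the single sentence ``The result is a simple application of Equations~\eqref{formula},'' and you have simply carried out that application in full detail. Your care in noting that monotonicity is precisely what guarantees all $\alpha_{ij}\ne 0$ (so that the $w_{ij}$ are well defined) is a point the paper leaves implicit.
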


Super-integrability of a flow in a manifold of dimension $d$ means that there exist $d-1$ functionally independent conserved quantities.

\begin{proof}
The result is a simple application of Equations	\eqref{formula}.
\end{proof}

These variables restrict well to the symmetric case. Indeed, the open sets
\[  \cH^{\bO}_{\pi, (i_0, j_0), \sigma} = \cH_{\pi, (i_0, j_0), \sigma} \cap \cO^{\bO} \subset \cU_{\pi}^{\bO} \]
 define an open cover of $\cO^{\bO} \smallsetminus \cD$.
 For an index $(i_0,j_0)$, define $w_{ij}$ as above.
 Straight line motion works as above, except of course that we do not need $Y$. 
 
 In the same fashion, variables also restrict well to profiles: as in Corollary \ref{charts-profiles},  it suffices to discard the variables $(i,j)$ not in the profile.
 For instance, for $n \times n $ symmetric tridiagonal matrices, the linearizing variables are $\lambda_1, \ldots, \lambda_n$ and $z_{21}, z_{32}, \ldots, z_{n, n-1}$.
 We use a simple identification of Jacobi matrices in terms of linearizing variables already considered in \cite{LST1}.
 
 \begin{prop} \label{jacobi}
 	For any chart $\phi_{D^\pi}^\bO$,
 	Jacobi matrices in variables $(y_{ij}, z_{ij})$ correspond to entries $y_{ij}=0$, $z_{ij} =0$ for $j \ne i-1$ and $z_{i, i-1}>0$.
 \end{prop}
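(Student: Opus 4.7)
The plan is to split the characterization of a Jacobi matrix into its three defining features and match each one to a condition on $(Y,Z)$ using results already established. Write $T=\phi_{D^\pi}^\bO{}^{-1}(y,z)$ and factor as in Equations~(\ref{YpD})--(\ref{ZpD}): $T=Q^\top(Y+D^\pi)Q$, $Q=LU$ with $L\in\bLone$ and $U\in\bUp$, and $Z+D^\pi=L^{-1}(Y+D^\pi)L$. Then $T=U^{-1}(Z+D^\pi)U$.

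First, by Proposition~\ref{charts}, $T$ is symmetric if and only if $Y=0$; this handles the symmetry. Second, by Proposition~\ref{cartasprofiles} applied to the tridiagonal profile $\p$ (generated by $\{(i,i-1)\}$), symmetric tridiagonal $T$ corresponds exactly to $Z\in\fkl^0\cap V_\p^\cS$, i.e.\ $z_{ij}=0$ for $j\neq i-1$; this handles the tridiagonal shape.

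The only remaining point is the positivity of the subdiagonal, and the key step is the following direct bookkeeping. From $T=U^{-1}(Z+D^\pi)U$ one has $Z+D^\pi=UTU^{-1}$, and I compute the $(i,i-1)$ entry
\[
z_{i,i-1}=\sum_{k,\ell} u_{ik}\,t_{k\ell}\,(U^{-1})_{\ell,i-1}.
\]
The constraints $u_{ik}=0$ for $k<i$, $(U^{-1})_{\ell,i-1}=0$ for $\ell>i-1$, and the tridiagonality $t_{k\ell}=0$ unless $|k-\ell|\le 1$ leave only the single term with $k=i$, $\ell=i-1$, yielding
\[
z_{i,i-1}=\frac{u_{ii}}{u_{i-1,i-1}}\,t_{i,i-1}.
\]
Since $U\in\bUp$, its diagonal entries are strictly positive, so $z_{i,i-1}$ and $t_{i,i-1}$ share the same sign. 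Thus $t_{i,i-1}>0$ for all $i$ if and only if $z_{i,i-1}>0$ for all $i$, which completes both directions.

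The only potential obstacle is making sure this argument is really independent of the permutation $\pi$, since the statement is claimed for every chart $\phi_{D^\pi}^\bO$. But the identity $z_{i,i-1}=(u_{ii}/u_{i-1,i-1})\,t_{i,i-1}$ used no property of $D^\pi$ beyond its diagonality (which ensures the diagonal piece of $D^\pi$ does not contribute to the $(i,i-1)$ entry), and the positivity of the diagonal of $U$ depends only on $Q\in\bOpos$. Hence the correspondence of signs holds uniformly for all $\pi\in S_n$.
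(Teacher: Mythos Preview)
Your proposal is correct, and the argument for the sign of the subdiagonal takes a genuinely different route from the paper. The paper handles the sign by a perturbative computation near $D^\pi$: writing $Q=I+A+\epsilon$ with $A\in\fko$, it expands $Q^\top D^\pi Q$ and $L^{-1}D^\pi L$ to first order and observes that the strictly lower triangular parts of $[D^\pi,A]$ and $[D^\pi,\Pi_{\fkl^0}A]$ coincide, so the signs agree in a neighborhood of $D^\pi$; an implicit continuity argument (using that $z_{i,i-1}$ and $T_{i,i-1}$ vanish simultaneously) then carries the conclusion to the whole chart domain. You instead read off directly from $Z+D^\pi=UTU^{-1}$ the exact identity $z_{i,i-1}=(u_{ii}/u_{i-1,i-1})\,t_{i,i-1}$, valid at every point. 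Your computation is more elementary, yields strictly more information (an explicit positive ratio, not merely a matching sign), and sidesteps the local-to-global passage; the paper's first-order expansion, by contrast, is the kind of argument that adapts more easily when no simple closed formula is available.
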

 
 \begin{proof} Let $\pi \in S_n$ and $D^\pi$ as usual. From the construction of $Z$ for tridiagonal symmetric matrices $T$, $y_{ij}=0$, $z_{ij} =0$  for $j \ne i-1$ and $z_{i, i-1} = 0$ if and only if $T_{i, i-1} = 0$. We are left with showing that the signs of $z_{i, i-1}$ and $T_{i, i-1}$ are equal. It suffices to show that this is true near $D^\pi= \diag(\lambda_{\pi(1)}, \ldots, \lambda_{\pi(n)})$. Disregarding profiles (as linearizing variables simply restrict to profiles, from Section \ref{profiles}), we consider matrices of the form $Q^\top D^\pi Q$ for $Q$ close to $I$, the identity matrix.  Denote by $\epsilon$ terms of lower order with respect to $\| Q - I\|$. For $Q = I + A + \epsilon , A\in \fko$,
 	\[   Q^\top D^\pi Q = ( I -  A + \epsilon) D^\pi (I + A + \epsilon) = D^\pi + [ D^\pi, A] + \epsilon \, . \]
 	Write $Q = LU, \in \bLone , U \in \bU$. Then 	
 	$L = I + \Pi_{\fkl^0} A + \epsilon$ and
 	\[ Z + D^\pi = L^{-1} D^\pi L = (I - \Pi_{\fkl^0} A) D^\pi (I + \Pi_{\fkl^0} A) = D^\pi + [ D^\pi , \Pi_{\fkl^0} \ A] + \epsilon \, . \]
 	The strictly lower triangular parts of $[ D^\pi, A]$  and $[ D^\pi , \Pi_{\fkl^0} \ A]$ are the same and, in particular, so are the signs of their entries.
 \end{proof}
 
  As a simple consequence, the half-space $ \cH^{\bO}_{\pi, (i_0, i_0-1), \sigma}$ is a set of matrices $T$ with $\sigma T_{(i_0, i_0-1)} > 0 $. The set of Jacobi matrices is the intersection of the half-spaces $ \cH^{\bO}_{\pi, (i_0, i_0-1), +1}$, for $i_0= 2, \ldots, n$. Figure \ref{fig:H} represents one such half-space for $n=3$. The first quadrant is the set of Jacobi matrices. More generally, Jacobi matrices would cover the first octant. In all cases, this octant is a Cartesian product in the variables $w_{i, i-1}$, implying super-integrability also in the set of Jacobi matrices for a monotonic polynomial $p$. 
  Super-integrability for the standard Toda flow (for which $p(x)=x$) on $n \times n $ Jacobi matrices  is due to Agrotis et al. \cite{Agrotis}.

\section{Linearizing SVD flows} \label{SVDlinearized}

Chu \cite{Chu} considered the differential equation on upper bidiagonal matrices $M$,
\begin{equation} \label{SVDflows}
 \dot M = M  \ \big( \Pi_\fko \, p(M^\top M) \big) - \big(\Pi_\fko p(M M^\top)\big) \  M \, ,
 \end{equation}
for  the special case $p(x) = x$. The projection  $\Pi_{\fko}$ is defined in Equation \eqref{TodaJacobi}.  Chu showed that $M(t) = \tilde Q^\top(t) M(0) \tilde U(t)$ for orthogonal matrices $\tilde Q(t),\tilde  U(t)$, and thus singular values of $M(t)$ and $M(0)$ are equal. Indeed, for the SVD representation $M(0)= Q_0^\top \Sigma U_0$, we obtain
\[ M(t) = \tilde Q^\top(t) \ Q_0^\top \Sigma U_0 \ \tilde U(t)  = Q^\top(t)\ \Sigma \ U(t) \, . \]
Chu also proved that, generically, as $t \to \infty$, we have $Q(t), U(t) \to I$.

Li \cite{Li} considered  smooth functions $f: \RR^+ \to \RR$ acting on (generic) non symmetric matrices. The time one map $M(0) \mapsto M(1)$ associated with the special case $p(x) = \log x$ is  the {\it zero-shift SVD algorithm} \cite{Golub} for computing the SVD representation of a  matrix. He then obtained the complete integrability of such equations for an open set of initial conditions in $\bGL$. A nontrivial task in his approach is the identification of an adequate symplectic structure.

\medskip
In this section we define linearizing variables for such flows.

\subsection{Manifolds of matrices with given singular values}

Let $\Sigma$ be a real diagonal matrix with simple, strictly positive spectrum. Define
\[ \cO^{\svd}_{\Sigma} \e  \{ Q^\top \ \Sigma\ U , \  \ Q, U \in \bO \} \, .\]
Standard algorithms to compute singular values are iterations within $\cO^{\svd}_{\Sigma}$ \cite{Golub} .

\bigskip
For $M \e Q^\top \Sigma U \in \cO^{\svd}_{\Sigma}, Q, U \in \bO$, write $M \e ( Q^\top \Sigma Q) (Q^\top U) \e S W$ for $S = Q^\top \Sigma Q \in \cO^\bO_\Sigma$ and $W = Q^\top U\in \bO$. The following result is the well known uniqueness of the polar decomposition  (see the Appendix).

\begin{prop} \label{uniqueSVD} The map $(S, W) \in \cO^\bO_\Sigma \times \bO \mapsto M \in \cO^{\svd}_{\Sigma} $ is a diffeomorphism.
\end{prop}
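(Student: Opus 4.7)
The plan is to invoke the uniqueness of polar decomposition for invertible matrices, which is the content referenced in the Appendix. The forward map $(S,W)\mapsto SW$ is obviously smooth, and its image lies in $\cO^{\svd}_\Sigma$: if $S=Q^\top \Sigma Q$ with $Q\in\bO$ and $W\in\bO$, then $SW = Q^\top \Sigma (QW)$, which is of the required form $Q^\top \Sigma U$ with $U=QW\in\bO$. Also note that every element of $\cO^\bO_\Sigma$ is positive-definite symmetric, since $\Sigma$ has simple and strictly positive spectrum, and every element of $\cO^{\svd}_\Sigma$ is invertible for the same reason.

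To establish the inverse, fix $M\in \cO^{\svd}_\Sigma$ and write $M = Q^\top \Sigma U$ for some $Q,U\in\bO$. The key computation is
\[ M M^\top \e Q^\top \Sigma U U^\top \Sigma Q \e Q^\top \Sigma^2 Q \, , \]
so $MM^\top$ is positive definite with spectrum $\Sigma^2$. Its unique positive-definite symmetric square root is $S \e (MM^\top)^{1/2} \e Q^\top \Sigma Q \in \cO^\bO_\Sigma$, and then $W \e S^{-1} M$ is orthogonal because $W W^\top \e S^{-1} M M^\top S^{-1} \e S^{-1}S^2 S^{-1} \e I$. This proves surjectivity and, simultaneously, identifies the candidate inverse map $\Phi: M \mapsto (S,W) \e \big((MM^\top)^{1/2},\ (MM^\top)^{-1/2}M\big)$.

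For injectivity, suppose $S_1 W_1 \e S_2 W_2$ with $S_i \in \cO^\bO_\Sigma$ (hence positive-definite symmetric) and $W_i\in\bO$. Then $(S_1 W_1)(S_1 W_1)^\top \e S_1^2$ and similarly $S_2^2$, so $S_1^2 \e S_2^2$; since both are positive definite, the unique positive square root gives $S_1 \e S_2$, and then $W_1 \e W_2$. This is exactly the uniqueness of polar decomposition, and it shows $\Phi$ is a genuine left/right inverse to $(S,W)\mapsto SW$.

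Finally, smoothness of $\Phi$ reduces to smoothness of $A\mapsto A^{1/2}$ on the open set of positive-definite symmetric matrices, which is standard (for instance via the holomorphic functional calculus or via the implicit function theorem applied to $B\mapsto B^2$, whose differential $H\mapsto BH+HB$ is invertible when $B$ is positive definite and has no two opposite eigenvalues). The only mild obstacle is making sure one is applying polar decomposition in the correct order ($M=SW$ rather than $M=WS$), and confirming that the resulting $S$ lies in $\cO^\bO_\Sigma$ rather than merely in the positive-definite cone; both points are handled by the single computation $MM^\top = Q^\top \Sigma^2 Q$ above.
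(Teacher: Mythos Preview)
Your proof is correct and follows essentially the same approach as the paper, which simply identifies the statement with the uniqueness and smoothness of the polar decomposition (cited in the Appendix) without spelling out details. Your argument makes explicit what the paper leaves implicit: that $S=(MM^\top)^{1/2}=Q^\top\Sigma Q\in\cO^\bO_\Sigma$ and $W=S^{-1}M\in\bO$, together with the smoothness of the positive square root.
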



The result suggests an atlas for $\cO^{\svd}_{\Sigma}$, but we consider a more balanced construction using chart domains $\cU_{D^\pi}^{\bO}$ and $\cU_{D^{\rho}}^{\bO}$ (for permutations $\pi, \rho \in S_n$)  of the isospectral symmetric manifold $\cO^\bO_{\Sigma^2}$.

Recall that $\cD^{\sgn}$ is the group of real sign diagonal matrices.
For permutation matrices $P_\pi$ and $P_\rho$ and $E \in \cD^{\sgn}$, define
\[ \cU_{\pi,\rho,E} = \{ M \in \cO^{\svd}_{\Sigma} \ | \ M = Q^\top_\pi P^\top_\pi \ \Sigma E\  P_\rho U_\rho \, , \  Q_\pi,  U_\rho \in\bOpos\} \, . \]

\begin{prop} The sets $\cU_{\pi,\rho,E}$ cover $\cO^{\svd}_{\Sigma}$:  $\cO^{\svd}_{\Sigma} = \bigcup_{\pi, \rho, E} \ \cU_{\pi,\rho,E} $. The maps
	\[ \tilde \phi_{\pi, \rho,E} : \cU_{\pi,\rho,E} \to   \cU_{D^\pi}^{\bO} \times \cU_{D^{\rho}}^{\bO} , \ M \mapsto (S_\pi, S_\rho) = (Q^\top_\pi \Sigma^\pi Q_\pi, U^\top_\rho \Sigma^\rho U_\rho)\]
	are diffeomorphisms, as are $\phi_{\pi, \rho,E} : \cU_{\pi,\rho,E} \to \fkl^0 \times \fkl^0$, $M \mapsto (\phi_{D^\pi}^\bO(S_\pi), \phi_{D^\rho}^\bO(S_\rho))$.
\end{prop}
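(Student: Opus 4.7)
The plan is to reduce this proposition to Proposition~\ref{charts} applied separately to the left and right orthogonal factors of an SVD of $M$. Given $M = Q^\top \Sigma U$ with $Q, U \in \bO$, I will use the same $\bO$-analogue of the PLU decomposition invoked in Proposition~\ref{charts} to write
\[ Q = E_1 P_\pi Q_\pi, \qquad U = E_2 P_\rho U_\rho, \]
for some $\pi, \rho \in S_n$, $E_1, E_2 \in \cD^{\sgn}$, and $Q_\pi, U_\rho \in \bOpos$. Since $\Sigma$ commutes with the diagonal $E_1$, this yields $M = Q_\pi^\top P_\pi^\top \Sigma E P_\rho U_\rho$ with $E := E_1 E_2 \in \cD^{\sgn}$, so $M \in \cU_{\pi,\rho,E}$, which proves the covering statement.

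To show $\tilde \phi_{\pi,\rho,E}$ is a diffeomorphism onto its image, I will exhibit its inverse. Given $(S_\pi, S_\rho) \in \cU_{D^\pi}^\bO \times \cU_{D^\rho}^\bO$, Proposition~\ref{charts} produces unique $Q_\pi, U_\rho \in \bOpos$ with $S_\pi = Q_\pi^\top \Sigma^\pi Q_\pi$ and $S_\rho = U_\rho^\top \Sigma^\rho U_\rho$, depending smoothly on the inputs. Reconstructing $M := Q_\pi^\top P_\pi^\top \Sigma E P_\rho U_\rho$ gives a preimage in $\cU_{\pi,\rho,E}$, and injectivity follows from the cancellation $E^2 = I$: one computes
\[ MM^\top = Q_\pi^\top (\Sigma^2)^\pi Q_\pi, \qquad M^\top M = U_\rho^\top (\Sigma^2)^\rho U_\rho, \]
so that $S_\pi$ and $S_\rho$ are the unique positive-definite square roots of $MM^\top$ and $M^\top M$, and Proposition~\ref{charts} then recovers $Q_\pi, U_\rho \in \bOpos$ uniquely. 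Smoothness in both directions is inherited from that of the inversion in Proposition~\ref{charts}.

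Openness of $\cU_{\pi,\rho,E}$ inside $\cO^{\svd}_\Sigma$ follows by describing it as a connected component of the preimage of the open set $\cU_{D^\pi}^\bO \times \cU_{D^\rho}^\bO$ under the smooth map $M \mapsto (\sqrt{MM^\top}, \sqrt{M^\top M})$, the sign $E$ being determined locally constantly by the explicit reconstruction formula. The Euclidean chart $\phi_{\pi,\rho,E} = (\phi_{D^\pi}^\bO \times \phi_{D^\rho}^\bO) \circ \tilde \phi_{\pi,\rho,E}$ is then a composition of diffeomorphisms, landing in $\fkl^0 \times \fkl^0$ by Proposition~\ref{charts}. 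The main technical point is the covering step, which relies on the orthogonal analogue of the PLU decomposition used in Proposition~\ref{charts}; here it is simply invoked twice, once for $Q$ and once for $U$, with the two sign matrices $E_1, E_2$ merging into a single parameter $E$ because $\Sigma$ is diagonal. No genuinely new linear-algebraic ingredient is needed beyond that already underlying Proposition~\ref{charts}.
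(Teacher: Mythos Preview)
Your proof is correct and follows the same overall strategy as the paper: use the PLU-type decomposition on each orthogonal factor to get the covering, and then establish that the map $\tilde\phi_{\pi,\rho,E}$ is a diffeomorphism by exhibiting an explicit smooth inverse. The covering argument is essentially identical to the paper's.

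The one place where you diverge slightly is in the uniqueness step. The paper writes $M = S_\pi W_\pi$ with $W_\pi = Q_\pi^\top P_\pi^\top E P_\rho U_\rho$ orthogonal, and then invokes the uniqueness of the polar decomposition (Proposition~\ref{uniqueSVD}) to pin down $S_\pi$ and $W_\pi$; from $W_\pi$ it then extracts $U_\rho$. You instead compute $MM^\top$ and $M^\top M$ directly and identify $S_\pi, S_\rho$ as their positive square roots, treating the left and right factors symmetrically. These are of course the same idea (the positive factor in the polar decomposition \emph{is} $\sqrt{MM^\top}$), but your phrasing is slightly more direct and avoids the auxiliary Proposition~\ref{uniqueSVD}; the paper's phrasing, on the other hand, ties in more explicitly with the polar-decomposition viewpoint it has just set up. Your openness argument (that $E$ is locally constant because it takes values in the discrete set $\cD^{\sgn}$ and depends continuously on $M$) is a nice addition not made explicit in the paper.
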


The maps $\phi_{D^\pi}^\bO$ and $\phi_{D^\rho}^\bO$ are the linearizing charts on $\cU_{D^\pi}^\bO$ and $\cU_{D^\rho}^\bO$ for the Toda flows defined in Proposition \ref{charts}.
Recall $\Sigma^\pi = \Pi_\pi^\top \Sigma \Pi_\pi$.

\begin{proof} Let $M \in \cO^{\svd}_{\Sigma}$. Then $M = \tilde Q^\top \Sigma \tilde U \, , \tilde Q, \tilde U \in \bO$. From the (PLU) decomposition (see  the Appendix), as in the proof of Proposition \ref{charts}, there are permutation matrices $P_\pi$ and $P_\rho$, acting on the columns of $\tilde Q^\top$ and on the rows of $\tilde U$, and $\tilde E_Q, \tilde E_P \in \cD^{\sgn}$ for which
	\[ \tilde Q^\top =   Q_\pi^\top P_\pi^\top \tilde E_Q   \, , \quad   \tilde U = \tilde E_U P_\rho   U_\rho , \quad \quad  Q_\pi, U_\rho \in  \bOpos \, . \]
	Thus  $M   =   Q_\pi^\top P_\pi^\top \tilde E_Q \ \Sigma \ \tilde E_U P_\rho   U_\rho=   Q_\pi^\top P_\pi^\top \tilde \Sigma \ E P_\rho   U_\rho $, for $E \in \cD^{\sgn}$, proving the covering property.	We now show that, for given $\pi, \rho$ and $E$ and $M \in \cU_{\pi,\rho,E}$, the decomposition 
	\begin{equation} \label{decspecial}
		 M = Q^\top_\pi P^\top_\pi \ \Sigma E\  P_\rho U_\rho \, , \quad \quad Q_\pi,  U_\rho \in\bOpos \, ,
		\end{equation}
	is unique. Write
	\[  M = \big( Q^\top_\pi P^\top_\pi \ \Sigma P_\pi Q_\pi\big) \big( Q_\pi^\top P_\pi^\top E\  P_\rho U_\rho \big)  = \big( Q^\top_\pi  \Sigma ^\pi Q_\pi\big) \big( Q_\pi^\top P_\pi^\top E\  P_\rho U_\rho \big) = S_\pi W_\pi . \]
	Thus another decomposition $ M = \hat Q^\top_\pi P^\top_\pi \ \Sigma  E\  P_\rho \hat U_\rho$ for $ \hat Q_\pi,  \hat U_\rho \in\bOpos $ yields $M = \hat S_\pi \hat W_\pi$ and, from Proposition \ref{uniqueSVD}, $S_\pi = \hat S_\pi,  W_\pi = \hat  W_\pi $. Explicitly,
	\[ Q^\top_\pi  \Sigma ^\pi Q_\pi = \hat Q^\top_\pi  \Sigma ^\pi \hat Q_\pi \, , \quad Q_\pi^\top P_\pi^\top E\  P_\rho U_\rho =\hat Q_\pi^\top P_\pi^\top  E\  P_\rho \hat U_\rho \, .\]
	From the first equality, since $Q_\pi, \hat Q_\pi \in \bOpos$, we have $Q_\pi = \hat Q_\pi$. From the second, $ U_\rho =  \hat U_\rho$. Thus, the decomposition (\ref{decspecial}) is indeed unique.
	
	For $M \in \cU_{\pi,\rho,E}$, the decomposition obtains all the matrices required for the definition of the chart $\tilde \phi_{\pi, \rho,E}$: it is well defined and smooth. Surjetivity is immediate. Injectivity follows from the uniqueness of the decomposition (\ref{decspecial}).
\end{proof}

\subsection{SVD flows in linearizing variables} \label{SVDlinearized2}

For polynomials $p$ and $q$, an extension of the differential Equation~(\ref{SVDflows}) is
\begin{equation} \label{SVDflowsgen}
	\dot M = M  \ \big( \Pi_\fko \, p(M^\top M) \big) - \big(\Pi_\fko\ q(M M^\top)\big) \  M \, .
\end{equation}
For $M \in \cU_{\pi,\rho,E} $, as $ M = Q^\top_\pi P^\top_\pi \ \Sigma E\  P_\rho U_\rho$, we  have
\[ W_\pi = M M^\top = Q_\pi^\top (\Sigma^\pi)^2 Q_\pi = S_\pi^2  \, , \quad  W_\rho = M^\top M = U_\rho^\top (\Sigma^\rho)^2 U_\rho= S_\rho^2  \]
and then Equation~(\ref{SVDflowsgen}) leads to two decoupled Toda flows on $\cO_{\Sigma^2}^\bO$,
\[ \dot W_\pi = [ W_\pi, \Pi_\fko\ q(W_\pi)] \, , \quad \dot W_\rho = [  W_\rho, \Pi_\fko\ p(W_\rho)]\, ,\]
which are linearizable from Section \ref{Todalinearized}. Embedding the SVD flows into a collection of  straight line motions now mimics the construction in Subsection \ref{lines}. We do not consider issues related to profiles.

\section{Extended calculi} \label{calculi}

We introduce two classes of flows  which are explicitly solved in linearizing coordinates. The first includes the Toda flows and admits $QR$ steps. The relevance of such flows to numerics is far from being established.

For  $C \in \fkl$, define  the {\it extended calculi}
\[ \cC_Q(M, C) = Q^\top C Q \, , \quad  \cC_L(M, C) = L^{-1} C L \, , \quad\cC_U(M, C) = U^{-1} C U  \, . \]
Here, we assume $M \in \cU_{D^\pi}^\bGL$, so that the  decompositions
\[ M = Q^\top (Y+D) Q \, , \quad Z+ D = L^{-1} (Y+D) L \, , \quad M = U^{-1} (Z+D) U\] define $Q, L$ and $U$. We drop the dependence on $\pi$ in $D^\pi$ for convenience. 
The definitions  extend the standard functional calculus:
\[ p(M) = \cC_Q(M, p(Y+D)) = \cC_U(M, p(Z+D)) \, , \quad p(Z+D) = \cC_L(Z, p(Y+D))  \, . \]
The Toda flows in this notation becomes
\[ \dot M \e [ M , \Pi_\fko \, \cC_Q(M, p(Y+D))] \e [ M , \Pi_\fko \, \cC_U(M, p(Z+D))] \ . \]
and we consider a natural larger class.

\begin{prop} [Some explicitly solvable flows] Let $C \in \fkl$ and $M_0 \in \cU_{D^\pi}^\bGL$ be constant matrices. The solution $M(t)$ of
\begin{equation} \label{novosfluxos} \dot M  = [M , \Pi_\fko \, \cC_Q(M, C)] \, , \quad M(0) = M_0
	\end{equation}
	is globally defined in $\cU_{D^\pi}^\bGL$. In linearizing coordinates,  we have
	 $D(t) = D(0)$, $Y(t) = Y(0)$ and 
	\[ Z(t) + D  = L^{-1}(t) (Y + D)(0) L(t) \, , \quad \quad L(t) = \exp(tC)\ L(0) \exp(-t \diag C)\, . \]
\end{prop}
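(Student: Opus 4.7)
The plan is to follow the template of the proof of Theorem~\ref{linearToda}, replacing $\Pi_\fko \, p(M)$ by $A = A(M) = \Pi_\fko \, \cC_Q(M,C) = \Pi_\fko(Q^\top C Q)$ throughout, and tracking carefully where the proof of the linearization of Toda used the specific form $A = \Pi_\fko p(M)$.

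First, I would apply Proposition~\ref{splitting}. Since $\dot M = [M,A] \in W$ by definition, the unique decomposition of $\dot M$ into components in $C$, $V$ and $W$ gives immediately
\[ \dot D^\pi = 0, \qquad \dot Y = 0, \qquad Q^\top \dot Q = A. \]
So we already have $D(t) = D(0)$ and $Y(t) = Y(0)$. It remains to compute $\dot L$ and check that the flow is global.

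Next, I would reuse the identity
\[ L^{-1}\dot L = \Pi_{\fkl^0}\bigl( U A U^{-1}\bigr), \]
derived in Equation~\eqref{LL} of the proof of Theorem~\ref{linearToda}; its derivation only uses $Q = LU$ and $\dot Q = Q A$, not the specific form of $A$. Here the key algebraic simplification replaces the old functional-calculus step: since $Q^{-1} = Q^\top = U^{-1}L^{-1}$, one has $U Q^\top C Q\, U^{-1} = L^{-1} C L$, which is lower triangular because $C \in \fkl$ and $L \in \bLone$. Writing $Q^\top C Q = \Pi_\fko(Q^\top C Q) + \Pi_\fku(Q^\top C Q)$, the upper-triangular part contributes a term that remains upper triangular after conjugation by $U$, hence is killed by $\Pi_{\fkl^0}$. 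Thus
\[ L^{-1}\dot L = \Pi_{\fkl^0}\bigl(L^{-1} C L\bigr) = L^{-1} C L - \diag(L^{-1} C L) = L^{-1} C L - \diag C, \]
using that conjugation by $L \in \bLone$ preserves the diagonal of a lower-triangular matrix.

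Rearranging gives the linear matrix ODE
\[ \dot L = C L - L \,\diag C, \]
whose explicit solution is $L(t) = \exp(tC)\, L(0)\, \exp(-t\,\diag C)$, checked by direct differentiation. I would then verify $L(t) \in \bLone$: both $C$ and $\diag C$ are lower triangular, so $\exp(tC)$ is lower triangular with diagonal $\exp(t\,\diag C)$, giving $L(t)$ lower triangular with diagonal entries $\exp(t\,\diag C)\cdot 1 \cdot \exp(-t\,\diag C) = 1$. Substituting back yields $Z(t) + D = L^{-1}(t)(Y+D)L(t)$, and global existence in $\cU_{D^\pi}^\bGL$ follows since $L(t) \in \bLone \subset \bGL$ is defined for all $t \in \RR$ and determines $Q(t), U(t), M(t)$ through the $QR$ factorization $L(t) = Q(t) R(t)$ together with $U(t) = R(t)^{-1}$, exactly as in the inverse construction of Proposition~\ref{charts}.

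The only mildly delicate step is the algebraic simplification $U A U^{-1} = L^{-1} C L - U\Pi_\fku(Q^\top C Q) U^{-1}$ followed by the observation that the second summand is upper triangular; the rest is bookkeeping. I do not expect any serious obstacle, as the structure parallels Theorem~\ref{linearToda} and the new ODE for $L$ has constant coefficients.
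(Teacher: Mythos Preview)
Your proposal is correct and follows essentially the same route as the paper's proof: invoke Proposition~\ref{splitting} to get $\dot D^\pi=\dot Y=0$, reuse the identity $L^{-1}\dot L=\Pi_{\fkl^0}(UAU^{-1})$ from Equation~\eqref{LL}, simplify via $UQ^\top=L^{-1}$ to obtain $L^{-1}\dot L=L^{-1}CL-\diag C$, and integrate the resulting constant-coefficient linear ODE for $L$. You supply a couple of details the paper leaves implicit (the verification that $L(t)\in\bLone$ and the reconstruction of $M(t)$ via Proposition~\ref{charts}), but the argument is the same.
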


\begin{proof} Following the proof of Theorem \ref{linearToda}, $\dot Y = \dot D = 0$ and, from Equation~\eqref{LL},
	\begin{align*}
	L^{-1} \ \dot L   &\e \Pi_\fklzero \ \big( U \ ( \Pi_\fko \, \cC_Q(M,C) ) U^{-1} \big) 
	 \e  \Pi_\fklzero \ \big( U \   \cC_Q(M,C)  U^{-1} \big) \\ &\e
	\Pi_\fklzero \ \big( U \   Q^\top C Q  U^{-1} \big) \e 	\Pi_\fklzero \ \big( L^{-1} C L \big) \\ &\e L^{-1} C L - \diag (L^{-1} C L) \e L^{-1} C L - \diag C . 
	\end{align*}
	Thus $\dot L \e L (L^{-1} C  L - \diag C) \e C L - L \diag C$, implying the formula for $L(t)$ in the statement. The other claims are easy. 
\end{proof}

\begin{remark} It is easy to obtain   $Q^\top \dot Q = \Pi_\fklzero Q^\top C Q$. In particular, such equation is also explicitly solvable, from the Schur decomposition of $M(t)$.
	\end{remark}

Here is another collection of flows easily described in linearizing variables.

\begin{prop}  For  $C \in \fkl$ and $M_0 \in \cU_{D^\pi}^\bGL$, the solution of
	\begin{equation} \label{Yfacil}
		\dot M = [ M,\cC_Q(M, C)] \, , \quad M(0) = M_0 \, .
	\end{equation}
is globally defined in $\cU_{D^\pi}^\bGL$. For the linearizing variables  $(D(t), Y(t), Z(t))$,
\[\dot D^\pi = 0 \, , \quad \dot Y = [ Y  + D , C] \, , \quad \dot Z = [ Z + D ,  \, \cC_L(M, C) ] \, . \]
\end{prop}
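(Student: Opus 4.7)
The plan is to mirror the proof of Theorem \ref{linearToda}: compute $\dot M$ directly, compare the result with the canonical splitting $\cM = C \oplus V \oplus W$ of Proposition \ref{splitting} (where the three summands arise from $M = Q^\top(Y+D)Q$), and read off the evolution of $D^\pi$, $Y$, $Z$, and $Q$.

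First I would compute
\[ \dot M \e [M,\,\cC_Q(M,C)] \e [Q^\top(Y+D)Q,\; Q^\top C Q] \e Q^\top[Y+D,\,C]\,Q. \]
Since both $Y+D$ and $C$ are lower triangular, their commutator is strictly lower triangular, i.e.\ lies in $\fkl^0$. Consequently $\dot M$ belongs to the summand $V = Q^\top\fkl^0 Q$, so by the uniqueness statement in Proposition \ref{splitting} the $\cD$-component and the $\fko$-component both vanish. This reads off as $\dot D^\pi = 0$, $\dot Y = [Y+D,\,C]$, and $Q^\top \dot Q = 0$. Orthogonality of $Q$ turns the last equation into $\dot Q = 0$, and then uniqueness of the LU factorization $Q = LU$ forces $L$ and $U$ to be constant along the flow as well.

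To obtain the formula for $\dot Z$, differentiate $Z + D = L^{-1}(Y+D)L$ with $L$ constant:
\[ \dot Z \e L^{-1}\dot Y\,L \e L^{-1}[Y+D,\,C]L \e [L^{-1}(Y+D)L,\,L^{-1}CL] \e [Z+D,\,\cC_L(M,C)], \]
which is the claimed expression. Global existence follows because $\dot Y = [Y,C] + [D,C]$ is an affine-linear ODE on the finite-dimensional space $\fkl^0$ (the invariance of $\fkl^0$ under this vector field being precisely the commutator computation above); hence $Y(t)$, and therefore $M(t) = \tilde\phi_\pi^{\bGL\,-1}(D^\pi,Y(t),Z(t))$ via the diffeomorphism of Proposition \ref{linearizingvariables}, is defined for all $t \in \RR$ and stays in $\cU_{D^\pi}^\bGL$.

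The only step requiring even a second of attention is verifying $[Y+D,C] \in \fkl^0$, which is the elementary fact that commutators of lower triangular matrices are strictly lower triangular. The remainder is symbolic bookkeeping; in fact, the calculation is strictly shorter than in Theorem \ref{linearToda}, the absence of the $\Pi_\fko$ projection being exactly what kills $\dot Q$ and trivialises the dependence of $L$ and $U$ on $t$.
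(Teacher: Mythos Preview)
Your proof is correct and follows essentially the same approach as the paper's: both compute $\dot M = Q^\top[Y+D,C]Q$, observe that $[Y+D,C]\in\fkl^0$, and then use the direct sum decomposition of Proposition~\ref{splitting} to conclude $\dot D^\pi=0$, $\dot Y=[Y+D,C]$, and $\dot Q=0$, after which the formula for $\dot Z$ follows by differentiating $Z+D=L^{-1}(Y+D)L$ with $L$ constant. Your one-line justification that commutators of lower triangular matrices are strictly lower triangular is in fact cleaner than the paper's splitting $C=\hat C+\diag C$, but the argument is otherwise the same.
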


\begin{remark} The solution of the differential equation for $Y(t)$ is
\[ Y(t) = \exp(-tC) \ \big( Y_0 + \int_0^t \exp(- s C) [  D , C] \ \exp( s C) \ ds \ \big) \ \exp(tC) \, .\]
For $C = E_{ij} = e_i \otimes e_j = e_i e_j^\top$,
$ Y(t) = Y_0 + t [Y_0 + D, E_{ij}] - t^2  (Y_0)_{ij} E_{ij}$.
For $C = E_{ii}$, $Y(t) = \big( I + (e^{-t} -1) E_{ii} \big) \  Y_0 \ \big( I + (e^{t} -1) E_{ii} \big) $ .
\end{remark}

\begin{proof} Since the differential equation is a Lax pair,  $\dot D = 0$. As usual, write $M = Q^\top(Y + D) Q$. 
	Now differentiate to get
	\begin{align*}
	 \dot M &= \dot Q^\top(Y + D) Q + Q^\top \ \dot Y \  Q + Q^\top(Y + D) \dot Q \\
	&= Q^\top \ (Q \dot Q^\top) (Y+D)\ Q + Q^\top \ \dot Y \ Q + Q^\top\ (Y+D) \ (\dot Q Q^\top) \ Q \\ &=  Q^\top \left(  [(Y+D) , \dot Q Q^\top] + \dot Y \right) \, Q 
	\end{align*}
Write Equation~(\ref{Yfacil}) as $\dot M = Q^\top [ Y+D , C] Q$ 
	to obtain
	\[ [ Y+D , C] = [ Y+D , \dot Q Q^\top] + \dot Y \, . \]
	Write $\hat C = C - \diag C = \Pi_\fklzero \, C$, so that
	\[ [ Y+D , \hat C] + [ Y+D ,\diag C] = [ Y+D , \dot Q Q^\top] + \dot Y \, . \]
	Let $\tilde V = \fkl^0$ and $\tilde W$ be the subspaces defined in the proof of Proposition \ref{splitting}. We have  $[ Y+D ,\diag C] = [ Y ,\diag C] \in \fkl^0$, $[ Y+D , \hat C], \dot Y \in \fkl^0 $ and therefore  $[ Y+D , \dot Q Q^\top] \in \fkl^0 = \tilde V$. We also have $[ Y+D , \dot Q Q^\top] \in  \tilde W$ and therefore, from Proposition \ref{splitting}, we must have
	\[ [ Y+D , \dot Q Q^\top] = 0 \, , \quad  \dot Y = [ Y+D , C]  \, . \]
	From Equation~(\ref{transv}), $\dot Q Q^\top = 0$, so that $\dot Q = 0$. In particular, for 
	$Q = LU$, we must have that $L$ and $U$ are constant. To obtain the evolution of $Z$, differentiate $Z+ D = L^{-1}(Y+D) L$ and use that $\dot L = 0$: 
	\[ \dot Z = [ Z+D, L^{-1} \dot L] + L^{-1} \dot Y L ] = 0 + L^{-1} [ Y + D, C] L = [Z+D, L^{-1} C L] \, , \]
	completing the proof.
	\end{proof}

%
%
%

\bigskip

 In numerical analysis, Toda flows arise as interpolations of matrix iterations preserving spectrum. Practical algorithms are described by iterations.

The archetipical example is the $QR$-step. Start with an invertible matrix $M = M_0$, write its $QR$ decomposition $M_0 = Q_0 R_0$ and set
\[ M_1 = R_0 Q_0 = Q_0^\top M_0 Q_0 = R_0 M_0 R_0^{-1} \, . \]

A simple approach is to search for a solution of the flow in terms of a {\it  Semenov-Tian-Shanski formula} \cite{STS} (also \cite{Symes, Symes2}). By this we mean the following. For a constant matrix $X$, consider the $QR$ decomposition
\[ e^{t X} = [e^{t X}]_Q [e^{t X}]_R = Q(t) \ R(t) \]
and verify if the conjugation $M(t) = Q^\top(t) M(0) Q(t)$ defines a bona fide flow.
The special case $X = \log M_0$, for example, gives rise to the standard Toda flow, $\dot M = [M , \Pi_\fko \, M] , M(0) = M_0$. Indeed, from the formula,
\[ M(1) = [e^{\log X}]_Q [e^{\log X}]_R = [M_0]_Q [M_0]_R = Q_0 R_0 = M_1\, . \]
Another interesting example is given in Theorem 7 of \cite{DLT2}. Notice that $X$ may not commute with $M_0$ and thus it is possible that conjugations by $[e^{t X}]_Q$ (as we consider) or by $[e^{t X}]_R$ may lead to different flows.

\begin{prop} Equation (\ref{novosfluxos}) is also solved by
	\[ M(t) = Q_e(t)^\top M_0 Q_e(t) \, , \ \  \hbox{where}\ \  e^{t\cC_Q(M_0, C)} = Q_e(t) R_e (t) , \ Q_e \in \bO \, , R_e \in \fku \, .  \]
\end{prop}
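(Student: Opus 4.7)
The plan is to show that $M(t) = Q_e(t)^\top M_0 Q_e(t)$ satisfies the Lax-type equation~(\ref{novosfluxos}) with initial condition $M_0$, and then conclude by uniqueness of solutions (using the global existence established in the preceding proposition).

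First I would compute $\dot M$ directly.  Setting $B(t) = Q_e^\top \dot Q_e$, orthogonality of $Q_e$ gives $B \in \fko$ and $\dot Q_e^\top = -B Q_e^\top$, so that
\[ \dot M \e \dot Q_e^\top M_0 Q_e + Q_e^\top M_0 \dot Q_e \e -B M + M B \e [M, B].\]
Next I would identify $B$.  Since $A := \cC_Q(M_0,C) = Q_0^\top C Q_0$ is constant, differentiating $e^{tA} = Q_e R_e$ yields $A e^{tA} = \dot Q_e R_e + Q_e \dot R_e$, and multiplying by $Q_e^\top$ on the left and $R_e^{-1}$ on the right gives
\[ Q_e^\top A Q_e \e B + \dot R_e R_e^{-1}.\]
Because $R_e \in \bU$ is upper triangular with positive diagonal, $\dot R_e R_e^{-1} \in \fku$, so the splitting $\cM = \fko \oplus \fku$ forces $B = \Pi_\fko\bigl(Q_e^\top A Q_e\bigr)$.

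The key step is then to recognize $Q_e^\top A Q_e$ as $\cC_Q(M(t),C)$.  Writing $M_0 = Q_0^\top (Y_0+D)Q_0$ with $Q_0 \in \bOpos$, we get
\[ M(t) \e (Q_0 Q_e(t))^\top \,(Y_0+D)\,(Q_0 Q_e(t)),\]
so, setting $\tilde Q(t) := Q_0 Q_e(t)$, we have $Q_e^\top A Q_e = \tilde Q^\top C \tilde Q$.  Since $\tilde Q(0) = Q_0 \in \bOpos$ and $\bOpos$ is open in $\bO$, by continuity $\tilde Q(t) \in \bOpos$ for small $t$; the uniqueness of the Schur-type factorization $M(t) = Q(t)^\top (Y_0+D)Q(t)$ with $Q(t)\in \bOpos$ (used throughout Section~\ref{Euclideancharts}, together with the fact from the preceding proposition that $Y(t)$ and $D$ are constant along the flow) then forces $Q(t) = \tilde Q(t)$ on some interval around $0$.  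Consequently $Q_e^\top A Q_e = Q(t)^\top C Q(t) = \cC_Q(M(t),C)$, so $B = \Pi_\fko \cC_Q(M(t),C)$ and the formula $\dot M = [M, \Pi_\fko \cC_Q(M,C)]$ holds on this interval.

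Finally, since the preceding proposition guarantees that the solution of~(\ref{novosfluxos}) is globally defined in $\cU^\bGL_{D^\pi}$ and since $M(0) = M_0$, the uniqueness of solutions to smooth ODEs extends the identification $M(t) = Q_e(t)^\top M_0 Q_e(t)$ to all $t$.  The main obstacle I anticipate is the cosmetic but delicate point of matching $\tilde Q(t) = Q_0 Q_e(t)$ with the \emph{positive-minor} representative $Q(t)\in\bOpos$; it is handled by openness of $\bOpos$ and the uniqueness clause of Proposition~\ref{charts}, together with the fact that once $\dot M = [M, \Pi_\fko \cC_Q(M,C)]$ is verified on any open interval containing $0$, global uniqueness carries it everywhere.
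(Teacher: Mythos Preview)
Your argument is correct and follows the same route as the paper's: compute $\dot M_e = [M_e, Q_e^\top\dot Q_e]$, identify $Q_e^\top\dot Q_e = \Pi_\fko(Q_e^\top A Q_e)$ by differentiating the QR factorization of $e^{tA}$, recognize $Q_e^\top A Q_e$ as $\cC_Q(M_e,C)$, and conclude by uniqueness of ODE solutions. You are in fact more careful than the paper on the identification step, handling the $\bOpos$ issue via openness and a local-to-global continuation, whereas the paper simply asserts $Q_e^\top\cC_Q(M_0,C)Q_e=\cC_Q(M_e,C)$.

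One small remark: your appeal to the preceding proposition (constancy of $Y$ and $D$ along the flow) is superfluous and mildly circular, since at that stage $M_e$ has not yet been shown to solve the flow. But you do not need it: the formula $M_e(t)=\tilde Q(t)^\top(Y_0+D)\tilde Q(t)$ with $\tilde Q(t)\in\bOpos$ already \emph{is} the unique Schur factorization of $M_e(t)$ by Proposition~\ref{fatoracoes}, so $Q(t)=\tilde Q(t)$ (and incidentally $Y(t)=Y_0$) follows directly, without reference to the flow.
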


\begin{proof} Let $M_e(t)= Q_e(t)^\top M_0 Q_e(t)$: we must show that $M_e (t)$ satisfies Equation~(\ref{novosfluxos}). Taking derivatives of $M_e (t)$, we obtain $\dot M_e = [ M_e , Q_e^\top \dot Q_e] $. Taking derivatives of the decomposition yielding $Q_e R_e$ (as in the proof of Theorem \ref{linearToda}), we obtain $Q_e^\top \dot Q_e = \Pi_\fko \, Q_e^\top C_Q(M_0, C) Q_e$. Adding up,
	\[ M_e  = [ M_e , \Pi_\fko \, Q_e^\top C_Q(M_0, C) Q_e]  = [ M_e , \Pi_\fko \, C_Q(M_e, C)] \, , \quad M_e(0)= M_0 \, , \]
	which is Equation~(\ref{novosfluxos}). By uniqueness, $M(t) = M_e(t)$.
\end{proof}

\section{Appendix: some basic linear algebra} \label{appendix}

The following facts from linear algebra are well known (\cite{HornJohnson}).

\medskip
\noindent (PLU) For $X \in \bGL$, there are $\pi \in S_n$, $  L \in \bLone , U \in \bU $ for which $ X = P_\pi L U $.

\medskip
The standard matrix decompositions  below are unique and smooth.

\medskip

\noindent	(LU) For $X \in \bGLpos$, there are $  L \in \bLone , U \in \bUp $ such that $X = L U $.

\smallskip
\noindent (QR) For $X \in \bGL$, there are $Q \in \bO$, $R \in \bUp$ such that $X = QR$.

\smallskip
\noindent(Polar) For $M \in \bGL$, there are $P$ positive symmetric, $Q \in \bO$ such that $M = PQ$.

\medskip

The Schur decomposition admits a similar result (\cite{HornJohnson}), but we only present in the proposition below the specific format fitting our purposes, together with a minor modification of the QR factorization. The sets  $ \cU_{D^\pi}^{\bO}$ and $ \cU_{\pi}^{\bGL}$  are defined in Equation~\eqref{cartas}.

\medskip

\begin{prop} \label{fatoracoes}
The following factorizations are unique and smooth.

\medskip
\noindent (QR) For $X \in \bGLpos$,   $X = QR$ where $Q \in \bOpos$ and $R \in \bUp$.  Similarly, $X = LQ$, for $ L \in \bLp$, and $Q \in \bOpos$. 

\smallskip
\noindent (Schur) Let $ \pi \in S_n$ and $D$ be a real diagonal matrix with simple spectrum. Set $D^\pi = \pi^\top D \pi$. For $S \in \cU_{D^\pi}^{\bO}$,  $S = Q^\top D^\pi Q$ for $Q \in \bOpos$.
For   $M \in \cU_{D^\pi}^\bGL$,  $M = Q^\top ( Y + D^\pi) Q$ for $Q \in \bOpos, Y \in \fkl^0$.

\end{prop}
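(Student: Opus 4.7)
The plan is to prove each factorization by reducing it to the classical decompositions already recalled just above (standard QR, LU, and spectral theorem), and then tracking how the hypothesis ``positive principal minors'' or ``$\cU_{D^\pi}^\bullet$'' forces uniqueness.

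For the modified (QR) factorization, I would first apply the classical QR to write $X = \tilde Q \tilde R$ with $\tilde Q \in \bO$ and $\tilde R \in \bUp$ (unique and smooth by standard results). The key observation is that because $\tilde R$ is upper triangular, the top-left $k\times k$ block of $X$ equals $\tilde Q_k \tilde R_k$, where subscript $k$ denotes the top-left block. Hence $\det X_k = \det \tilde Q_k \cdot \det \tilde R_k$. Since $\det \tilde R_k>0$, the hypothesis $X \in \bGLpos$ is equivalent to $\tilde Q \in \bOpos$, so we may take $Q = \tilde Q$ and $R = \tilde R$. The $X = LQ$ variant is obtained by applying the above to $X^\top$ and transposing.

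For the symmetric Schur case, existence is built into the definition of $\cU_{D^\pi}^\bO$. Uniqueness: if $S = Q_1^\top D^\pi Q_1 = Q_2^\top D^\pi Q_2$ with $Q_i \in \bOpos$, then $W := Q_2 Q_1^\top \in \bO$ commutes with $D^\pi$. Simplicity of the spectrum of $D^\pi$ forces $W$ to be diagonal, and orthogonality then forces $W \in \cD^{\sgn}$. Writing $Q_2 = W Q_1$ and taking determinants of the top-left $k\times k$ blocks gives $\det (Q_2)_k = (\prod_{i\le k} W_{ii})\,\det (Q_1)_k$. Since both sides are positive, induction on $k$ yields $W_{kk}=1$ for every $k$, so $W = I$. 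Smoothness then follows from the smoothness of the polar / QR decompositions, via the implicit function theorem.

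For the general Schur case, I would start from any $X \in \bGLpos$ with $M = X^{-1} D^\pi X$, and apply the $X = LQ$ factorization just established: $L \in \bLp$, $Q \in \bOpos$. Writing $L = D_L L_1$ with $D_L$ positive diagonal and $L_1 \in \bLone$, we have $L^{-1} D^\pi L = L_1^{-1} D^\pi L_1$ because $D_L$ commutes with $D^\pi$. Since conjugation of a diagonal matrix by an element of $\bLone$ preserves the diagonal, $L_1^{-1} D^\pi L_1 = Y + D^\pi$ with $Y \in \fkl^0$, giving existence. The main obstacle here is uniqueness: $X$ itself is determined by $M$ only up to left multiplication by a positive diagonal matrix $\Delta$ (diagonal matrices are the centralizer of $D^\pi$, and $\Delta X \in \bGLpos$ iff $\Delta$ is positive). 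Under $X \mapsto \Delta X$, the unique $LQ$ factorization changes as $L \mapsto \Delta L$, $Q \mapsto Q$, so $Q$ is intrinsic to $M$; moreover $D_L \mapsto \Delta D_L$ while $L_1$ is unchanged, so $Y = L_1^{-1} D^\pi L_1 - D^\pi$ is intrinsic as well. Smoothness of the assignment $M \mapsto (Q,Y)$ follows step by step from the smoothness of each decomposition used.
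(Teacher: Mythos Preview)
Your arguments for the (QR) part and for the symmetric Schur case are correct and close to the paper's (the paper phrases QR via Gram--Schmidt and the ``orthogonal $\cap$ triangular $\Rightarrow$ sign diagonal'' trick, but your principal-minor computation is equivalent). Existence in the general Schur case is also handled the same way in the paper: write $M = X^{-1} D^\pi X$, factor $X = LQ$, and observe that $L^{-1} D^\pi L - D^\pi \in \fkl^0$.

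There is, however, a genuine gap in your uniqueness argument for the general Schur factorization. What you prove is that the \emph{construction} $X \mapsto (Q,Y)$ is independent of the choice of $X \in \bGLpos$ representing $M$. That is a well-definedness statement, not a uniqueness statement: you still owe the reader a reason why every pair $(Q',Y') \in \bOpos \times \fkl^0$ with $M = Q'^\top (Y'+D^\pi) Q'$ arises from some $X' \in \bGLpos$ via your construction. This is true (the lower triangular matrix $Y'+D^\pi$ has the simple eigenvalues $\lambda_{\pi(i)}$ on its diagonal, hence is diagonalized by a unique $L' \in \bLone$, and then $X' = L'Q' \in \bGLpos$ works), but without saying so the argument is incomplete.

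The paper avoids this detour entirely: it proves uniqueness directly by setting $Q = Q_2 Q_1^\top$ and reading the intertwining relation $Q(Y_1+D^\pi) = (Y_2+D^\pi)Q$ entry by entry along successive anti-diagonals, starting from position $(1,n)$; simplicity of the spectrum forces each strictly upper-triangular entry of $Q$ to vanish, so $Q$ is diagonal, hence (by the $\bOpos$ hypothesis) the identity. This is shorter and does not require going back through a diagonalizing $X$.
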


\begin{proof}
We sketch  existence and uniqueness for the reader's convenience.

 For the QR factorization  $X=LQ$, apply the Gram-Schmidt algorithm to the rows of $X$.
 To show uniqueness, suppose
$ X = L_1 Q_1  = L_2 Q_2 $ so that $Q_2 Q_1^{-1} = L_2^{-1} L_1$. The left (resp. right) hand side is orthogonal (resp. lower triangular): they must be equal to a  diagonal  matrix $E$ with diagonal entries of modulus one. As $Q_1, Q_2 \in \bOpos$, we  have $E = I$, the identity matrix, and uniqueness follows. The decomposition $X = QR$ is treated similarly.

$M$ is real, symmetric, the Schur triangularization of $M$ reduces to the spectral theorem. In general, for $M \in \cU_{D^\pi}^\bGL$, we have $M = P^{-1} D^\pi P$ with $P \in GL^p$, and the QR factorization gives $P = L Q$ for $ L \in \bL$, and $Q \in O^s$. Thus $M = Q^\top L^{-1} D^\pi L Q = Q^\top (Y + D^\pi)Q$ for $Y \in \fkl^0$. To prove uniqueness, let $M = Q_1^\top (Y_1 + D^\pi) Q_1 = Q_2^\top (Y_2 + D^\pi) Q_2$ and, for $Q = Q_2 Q_1^\top$,
\[ Q (Y_1 + D^\pi) = (Y_2 + D^\pi) Q \, .\]
Equating entry $(1,n)$ and using the simplicity of the spectrum of $D^\pi$, one learns that $Q_{1,n}=0$. Equate entries $(1, n-1)$ and $(2,n)$ to learn that the corresponding entries of $Q$ are also zero. Proceed along subdiagonals, to conclude that $Q$ is diagonal. As $Q_1, Q_2 \in \bOpos$, one must have $Q_1 = Q_2$,	and thus $Y_1 = Y_2$. If $M=S$ is symmetric, we must have $L=0$.
\end{proof}

\bigskip\bigskip\bigbreak

{
	
	\parindent=0pt
	\parskip=0pt
	\obeylines
	
	Ricardo S. Leite, Departamento de Matem\'atica, UFES
	Av. Fernando Ferrari, 514, Vit\'oria, ES 29075-910, Brazil
	
	\smallskip
	
	Nicolau C. Saldanha and Carlos Tomei, Departamento de Matem\'atica, PUC-Rio
	R. Marqu\^es de S. Vicente 225, Rio de Janeiro, RJ 22451-041, Brazil
	
	\smallskip
	
	David Mart\'inez Torres, Departamento de Matem\'atica Aplicada, Secci\'on de Arquitectura, Universidad Polit\'ecnica de Madrid, Avda. Juan de Herrera 4, Madrid  28040, Spain

	\smallskip
	
	ricardo.leite@ufes.br
	saldanha@puc-rio.br
	carlos.tomei@puc-rio.br
	dfmtorres@gmail.com
	
}

\end{document}